\newcommand{\Title}{Title}
\numberwithin{equation}{section}
\theoremstyle{definition}\newtheorem{definition}{Definition}[section]
\newtheorem{defititle}[definition]{\Title}
\newtheorem{terminology}[definition]{Terminology}
\newtheorem{remark}[definition]{Remark}
\newtheorem{remarks}[definition]{Remarks}
\newtheorem{ex}[definition]{Example}
\newtheorem{exs}[definition]{Examples}}
\newtheorem{prop}[definition]{Proposition}
\newtheorem{proposition-definition}[definition]{Proposition-Definition}
\newtheorem{thm}[definition]{Theorem}
\newtheorem{cor}[definition]{Corollary}
\newtheorem{assumptions}[definition]{Assumptions}
\newtheorem*{prop*}{Proposition}
\newtheorem*{theorem*}{Theorem}
\newcommand{\cG}{\mathcal{G}}
\newcommand{\cF}{\mathcal{F}}
\newcommand{\cE}{\mathcal{E}}
\newcommand{\cJ}{\mathcal{J}}
\newcommand{\id}{{\hbox{id}}}
\newcommand{\eg}{{\it e.g.}\/ }
\newcommand{\cf}{{\it cf.}\/ }
\def\gpd{\,\lower1pt\hbox{$\longrightarrow$}\hskip-.24in\raise2pt
             \hbox{$\longrightarrow$}\,}
\renewcommand{\latticebody}{\drop@{ }}
\newcommand{\lra}{\longrightarrow}
\newcommand{\Z}{\ensuremath{\mathbb Z}}
\newcommand{\R}{\ensuremath{\mathbb R}}
\newcommand{\g}{\ensuremath{\mathfrak{g}}}
\newcommand{\cC}{\mathcal{C}}            
\def\act{\mathbin{\hbox{$<\kern-.4em\mapstochar\kern.4em$}}}
\def\ract{\mathbin{\hbox{$\mapstochar\kern-.3em>$}}}
\def\exp{\mathrm{exp}}
\def\PB(#1,#2,#3,#4){\left\{\begin{matrix}#1&\!\!\!\stackrel{?}{\longrightarrow}&\!\!\!#2\\
\downarrow&&\!\!\!\downarrow\\
#3&\!\!\!\stackrel{?}{\longrightarrow}&\!\!\!#4\end{matrix}\right\}}
\def\pb(#1,#2,#3,#4){ \hom(#1 \to #3, #2 \to #4)}
\begin{document}

\begin{center}
{\Large\bf Integrable lifts for transitive Lie algebroids\footnote{AMS subject classification: 	58H05 ~ Secondary 14F40. Keywords: Transitive Lie algebroid, integrable lift.} 

\bigskip

{\sc by Iakovos Androulidakis and
 Paolo Antonini 
}
}
 
\end{center}

{\footnotesize
Department of Mathematics 
\vskip -4pt National and Kapodistrian University of Athens
\vskip -4pt Panepistimiopolis
\vskip -4pt GR-15784 Athens, Greece
\vskip -4pt e-mail: \texttt{iandroul@math.uoa.gr}
  
\vskip 2pt SISSA, 
\vskip-4pt  Via Bonomea 265, 34136 Trieste, Italy
\vskip-4pt e-mail: \texttt{pantonin@sissa.it}

}
\bigskip
\everymath={\displaystyle}

\date{today}

\begin{abstract}\noindent 
Inspired by the work of Molino, we show that the integrability obstruction for transitive Lie algebroids can be made to vanish by adding extra dimensions. In particular, we prove that the Weinstein groupoid of a non-integrable transitive and abelian Lie algebroid, is the quotient of a finite dimensional Lie groupoid. Two constructions as such are given: First, explaining the counterexample to integrability given by Almeida and Molino, we see that it can be generalized to the construction of an ``Almeida-Molino'' integrable lift when the base manifold is simply connected. On the other hand, we notice that the classical de Rham isomorphism provides a universal integrable algebroid. Using it we construct a ``de Rham'' integrable lift for any given transitive Abelian Lie algebroid.
\end{abstract}
 
\setcounter{tocdepth}{2} 


\tableofcontents

\section*{Introduction}\label{sec:intro}

In this paper we revisit Lie's 3rd theorem, that is the question of integrability, in the more general context of Lie algebroids. We are inspired by the first example of Lie algebroids whose integrability fails, given by Almeida and Molino \cite{AlmeidaMolino} in their work concerning foliation theory. We take the point of view that adding extra dimensions can treat the integrability problem, especially if one is interested in applications to index theory and non commutative geometry. The first step in this direction is the examination of \textit{transitive} and \textit{abelian} Lie algebroids.

In fact we are motivated further by foliation theory, in particular by the following, constructive proof of Lie's 3rd theorem found in \cite{DuistKolk}: Given a finite dimensional Lie algebra $\g$, the Banach manifold $P(\g) = \Omega^1([0,1],\g)$ admits an action by gauge transformations. The orbits form a foliation with finite codimension and the associated leaf space $G(\g)$ turns out to be the connected and simply connected Lie group whose Lie algebra is $\g$. The deeper reason for the smothness of $G(\g)$ is really the vanishing of the second homotopy group for (finite dimensional) Lie groups.

The above construction was first generalised to Lie algebroids arising in Poisson geometry by Cattaneo and Felder \cite {CatFeld} and then to general Lie algebroids by Crainic and Fernandes \cite{CFLie}. However, Lie groupoids in general do not have vanishing second fundamental group (\cf \cite{TsengZhu}), so for a generic Lie algebroid $B \to M$, the leaf space $G(B)$ arising from the above construction is not necessarily a smooth topological groupoid, with connected and simply connected $s$-fibers. In \cite{CFLie} the explicit obstructions to the smoothness of $G(B)$ were given. Particularly in the transitive case\footnote{In the transitive case the integrability obstruction was first given by Mackenzie (see \cite{MK2}). However, in this paper we will use the formulation provided by Crainic and Fernandes \cite{CFLie}.}, the only obstruction is the discreteness of the image of a certain monodromy, boundary map, which is defined on $\pi_2(M)$ and takes its values in the isotropy of $B$. 

For a \emph{transitive} Lie algebroid $B$ such that this discreteness fails, the following problems arise: 
\begin{itemize}
\item In order to continue doing differential geometry, one works with the Banach manifold $P(B)$ (\cf \cite{CatContr}). But as $P(B)$ is infinite dimensional, the use of quite involved techniques is required.
\item The lack of smooth densities in $G(B)$ and $P(B)$ prevents using the standard techniques developed by A. Connes in order to attach a $C^{\ast}$-algebra, a pseudodifferential calculus and to deploy $K$-theoretic techniques. In particular, one wishes to be able to develop index theory when $B$ is not integrable. 
\item On simply connected manifolds, for those transitive Lie algebroids which arise from a symplectic form, the non-smoothness of $G(B)$ corresponds to the non-existence of a prequantizing principal $S^1$-bundle (\cf \cite{Crainic:2004aa}).
\end{itemize}
\paragraph{The lifting problem.} For all these reasons it would be preferable if $G(B)$ could be realized as the leaf space of a foliation on a \emph{finite dimensional} Lie groupoid $G(\widehat{B})$. Explicitely, one wishes to show that, given a non-integrable transitive Lie algebroid $B$, there exists a canonical transitive Lie algebroid $\widehat{B}$  such that: 
\begin{enumerate}
\item The monodromy group of $\widehat{B}$ is discrete, so $G(\widehat{B})$ is a Lie groupoid (finite dimensional).
\item There exists a surjective morphism of Lie algebroids $\widehat{B} \to B$. This integrates to a morphism of topological groupoids $G(\widehat{B}) \to G(B)$ which is onto.
\end{enumerate}
We refer to these questions as the \emph{lifting problem} for the (non-integrable) Lie algebroid $B \to M$. A solution is a Lie algebroid $\widehat{B}$ with the above properties, and its existence has the following implications: 
\begin{itemize}
\item $G(B)$ is realized as the leaf space of a foliation on the \emph{finite dimensional} Lie groupoid $G(\widehat{B})$, by the orbits of the action of $\ker(\widehat{B} \to B)$. 
\item The Lie groupoid $G(\widehat{B})$ does have $C^{\ast}$-algebras and the fibration $G(\widehat{B}) \to G(B)$ conjecturally provides a Thom isomorphism in $K$-theory. 
\item Regarding geometric prequantization, although a smooth principal $S^1$-bundle does not exist, a solution to the lifting problem provides the quotient of a smooth principal bundle instead (\cf \cite{Crainic:2004aa}).
\end{itemize}

\paragraph{Results in the simply connected case.} In the current paper we discuss only the solution of the lifting problem, leaving applications to index theory for the future. To this end, first one needs to specify the topological conditions under which the lifting problem is expected to admit a solution. In this first approach of ours, we pose the following ones:
\begin{enumerate}
\item The manifold $M$ is \emph{simply connected}. The latter ensures that the Hurewicz map is an isomorphism and this simplifies our constructions. The reason is that, the Crainic-Fernandes monodromy map factorizes through the Hurewicz map 
 (after tensoring $\pi_2(M)$ with $\R$ with the effect of doing away with possible torsion elements). More precisely, that is the Hurewicz map of the universal cover.
\item The group $\pi_2(M)$ is \emph{finitely generated}. Although this condition is rather mild, it is quite necessary: In \S \ref{sec:integrlifts} we show that otherwise the lifting problem has no solution.
\item The transitive algebroid $B$ is \emph{abelian}. In this case $B$ is classified by an element $[\omega] \in H^2_{dR}(M,V)$ where $V$ is the (abelian) isotropy Lie algebra. The integrability obstruction is exactly the discreteness of the group of periods of $\omega$. Apart from the computability of the integrability obstruction, working in the abelian case is justified because other, more general and interesting classes of algebroids can be written as abelian extensions (see examples \ref{exs:extns}). As the classification of abelian transitive Lie algebroids is an instance of the classification of extensions of abelian extensions of (not necessarily transitive) Lie algebroids (\cf \cite{MK2}), our constructions might also be valid in these more general cases.

\end{enumerate}
Under these assumptions, we show that the lifting problem can be answered as follows:
\begin{enumerate}
\item First we explain the Almeida-Molino counterexample: A representative $\omega$ can be decomposed as a sum of closed 2-forms with values in $V$ 
 $$\sum_{i=1}^n\varphi_i \otimes \Theta_i,$$ where $n$ is an integer number which depends on the rank of $\pi_2(M)$, the dimension of $V$ and the class of $\omega$. The form $\widehat{\omega} = (\varphi_1 \otimes \Theta_1,\ldots,\varphi_n \otimes \Theta_n)$ then defines the integrable lift $\widehat{B}$; the morphism $\widehat{B} \to B$ is just the map which sums the components.
 This construction depends on some choices but the integrating gropoids will be (in this Abelian case) Morita equivalent. For instance as far as one is concerned with $K$-Theory we expect that these choices will be irrelevant.
We call these constructions the \emph{Almeida-Molino} integrable lifts. When $B$ is integrable, we prove that $n=1$ and this construction gives nothing new.

\item There is a second solution to the lifting problem, which is universal. The construction is based on the observation that the familiar de Rham isomorphism on the manifold $M$ can be realized as a closed 2-form $\Theta$ with values in the real homology  group $H_2(M;\R)$. This form automatically has discrete periods, and the Lie algebroid it induces leads to a solution for the lifting problem of $B$.
\end{enumerate}

\paragraph{The non simply connected case.} When the base manifold $M$ is not simply connected, the Hurewicz map is not an isomorphism. In particular the lack of injectivity
 makes the implementation of the de Rham lifting construction quite difficult. The standard strategy to overcome these difficulties is to pull the given Lie algebroid $B \to M$ back to the universal cover via the covering projection $\pi : \widetilde{M} \to M$ and perform the de Rham lifting construction for $\pi^{\ast}B$. Then one has to somehow produce an integrable lift $\widehat{B}$ for $B$ from the integrable lift $\widehat{\pi^{\ast}B}$. 

To carry out this scheme, extra topological assumptions on the manifold $M$ are needed. For instance, this can be achieved when the fundamental group $\pi_1(M)$ is finite: The interested reader may check that the de Rham integrable lift $\widehat{\pi^* B}$ carries a natural $\pi_1(M)$ action. If $\pi_1(M)$ is finite, this action can be averaged out and we obtain an integrable $\widehat{B}$. 
However, imposing that $\pi_1(M)$ is finite is a rather strong topological condition. In order to give a more general solution to the lifting problem, in \S \ref{sec:nonsc} we impose a different topological assumption for $M$, which can probably be met in many more cases. This assumption is basically the existence of an equivariant family of forms which produces the second de Rham cohomology of $\widetilde{M}$. We show that this assumption amounts to the existence of a closed 2-form which is \textit{equivariant}, and whose associated Lie algebroid resolves the lifting problem for $\pi^{\ast}B$. In \S \ref{sec:solnonsc} we explain how this leads to a solution of the lifting problem for the original Lie algebroid $B$.

Finally, from private communication with Ioan Marcut, we are aware of a lifting construction of his. It solves the lifting problem in the simply connected case by adding the minimal possible number of dimensions.

\paragraph{Acknowledgements.} We kindly thank Georges Skandalis for proposing the problem and for a number of inspiring discussions. Also Ugo Bruzzo, Kirill Mackenzie, Ioan Marcut and Alan Weinstein for their useful insights. In particular,  I. Marcut provided us counterexample \ref{counter} and his insights on the monodromy map for transitive algebroids which are contained in section \ref{sec:CFobstr}. We also thank him for providing us notes of his concerning the lifting problem, at an early stage of this project. The idea of the ``de Rham'' construction is parallel to the ideas discussed in these notes.

This paper was partially supported by
the grant H2020-MSCA-RISE-2015-691246-QUANTUM DYNAMICS.
I. Androulidakis was partially supported by ANR-14-CE25-0012-0 and SCHI 525/12-1 (DFG) and FP7-PEOPLE-2011-CIG, Grant No PCI09-GA-2011-290823. P. Antonini was partially supported by ANR-14-CE25-0012-01, by SCHI 525/12-1 (DFG) and NWO Veni grant No 613.009.031.

\section{Abelian transitive Lie algebroids and their integrability}\label{sec:trans}

For the convenience of the reader, we recall in \S \ref{sec:transalgds} the classification of transitive Lie algebroids from \cite{MK2}, focusing on the abelian case in \S \ref{sec:abcentr}. In \S \ref{sec:CFobstr} we recall the integrability obstruction given by Crainic and Fernandes \cite{CFLie} for the transitive case. The abelian transitive Lie algebroids are the most favorable ones in view of the computability of this obstruction.  In \S \ref{sec:CFobstr} we recall the integrability problem and the associated obstruction and give some properties of this obstruction that we will need in this paper. The most important one is factorization through the  Hurewicz map. Last, in \S \ref{sec:integrlifts} we explain the problem of lifting the integrability obstruction of a transitive Lie algebroid.

Throughout the paper, $M$ will be a connected smooth manifold (finite dimensional). Recall that a  \textbf{transitive Lie algebroid} $B$ over $M$ is an extension of vector bundles
\begin{equation}\label{eqn:algd}
	\xymatrix{0\ar[r]&L\ar[r]^\iota&B\ar[r]^\rho &TM\ar[r]&0}
\end{equation}
together with a Lie bracket $[\cdot,\cdot]$ on the space of smooth section $\Gamma(B)$ satisfying the Leibniz rule:
\[[a,fb]=f[a,b]+\rho(a)(f)b, \ \ a,b\in \Gamma(B), \ f\in C^{\infty}(M).\]
The map $\rho$ is called the \textbf{anchor} of $B$; the induced map $\rho : \Gamma(B) \to \Gamma (TM)$ preserves the Lie brackets. The bundle $L=\ker(\rho)$ is a Lie algebra bundle, (next called LAB) and for $x\in M$, the Lie algebra $L_x$ is called the \textbf{isotropy Lie algebra} at $x$.

\subsection{Mackenzie's classification of transitive algebroids}\label{sec:transalgds}

A splitting $\sigma:TM \to B$ of \eqref{eqn:algd} determines a vector bundle isomorphism
\[TM\oplus L\simeq B,\ \ \  X\oplus \xi\mapsto \sigma(X)+\iota(\xi).\]
Pulling back the Lie algebroid structure through this isomorphism, the anchor map becomes the projection to $TM$ and the bracket
\begin{eqnarray}\label{bracket}
[X\oplus\xi,Y\oplus\eta] = [X,Y] \oplus \big(\nabla^{\sigma}_X\eta - \nabla^{\sigma}_Y\xi + [\xi,\eta] -R_{\sigma}(X,Y)\big),
\end{eqnarray} 
where $X,Y \in \Gamma(TM)$, $[X,Y]$ is the usual bracket of vector fields, and $\xi, \eta \in \Gamma(L)$; the expression $[\xi,\eta]$ stands for the pointwise bracket on $L$. The connection $\nabla^{\sigma}$ on $L$ is defined by $$\nabla^{\sigma}_X (\xi) = \iota^{-1}[\sigma(X),\iota(\xi)]$$ for every $X \in \Gamma(TM)$ and $\xi \in \Gamma (L)$. The 2-form $R_{\sigma} : TM \times TM \to L$ is the curvature of $\nabla^{\sigma}$.

Whence, given a transitive Lie algebroid $B$, the choice of a splitting $\sigma$ induces the \emph{normal form} $TM \oplus_{R_{\sigma}} L$ of $B$.

Moreover, we have the following results.
\begin{enumerate}
\item The operator $\nabla^{\sigma}$ is a connection on $L$ (in the sense of vector bundles) from the Leibniz rule, while, the Jacobi identity implies that $\nabla^{\sigma}$ is a Lie connection:
  $$\nabla^\sigma_X[\xi,\eta]=[\nabla^\sigma_X \xi,\eta]+[ \xi,\nabla^\sigma_X\eta], \quad \xi,\eta \in \Gamma(L).$$
\item The ``curvature form'' $R_\sigma \in  \Omega^2(M;L)$ is $$R_{\sigma}(X,Y) = \sigma[X,Y] - [\sigma(X),\sigma(Y)].$$

    Recall from \cite[Proposition 5.2.19]{MK2} that this 2-form is related with the curvature $R_{\nabla^{\sigma}}$ of the connection $\nabla^{\sigma}$ by the following identities:
\begin{eqnarray}\label{compatibility}
R_{\nabla^{\sigma}} = \operatorname{ad} \circ\, R_\sigma \quad\text{and}\quad \nabla^{\sigma}(R_\sigma)=0 \quad\text{(the Bianchi identity)},
\end{eqnarray}
where $\operatorname{ad}(a)(\xi)=[a,\xi]$, for $a\in \Gamma(B)$ and $\xi \in \Gamma(L)$.
We have extended $\nabla^\sigma$ to a covariant derivative $\nabla^\sigma: \Omega^n(M;L) \to \Omega^{n+1}(M;L)$ in the usual way.
\item Due to the Jacobi identity $\nabla^{\sigma}$ preserves the center $ZL$ and the restricted connection on $ZL$ does not depend on $\sigma$. We denote it $\nabla^{ZL}$. From \eqref{compatibility} we see that is flat.
\end{enumerate}
Conversely, 
\cite[Corollary 7.3.8]{MK2} says that: Given a Lie algebra bundle $(L,[\cdot,\cdot]) \to M$ with an $L$-valued 2-form $\omega\in \Omega^2(M;L)$ and a Lie connection $\nabla$ on $L$, which satisfies the compatibility equations \eqref{compatibility}, then formula  \eqref{bracket} determines a transitive Lie algebroid structure on $TM \oplus L$.

\subsection{Abelian transitive Lie algebroids}\label{sec:abcentr}

As we saw in \S \ref{sec:transalgds}, the favorable cases of transitive Lie algebroids are the ones with central curvature. Abelian transitive Lie algebroids are in this class, and here we describe them. 

\begin{definition}\label{dfn:centrextns}
\begin{enumerate}
A transitive Lie algebroid \eqref{eqn:algd} is called
\item \emph{Abelian}: if its isotropy is an abelian LAB. In other words, if it is an extension of vector bundles
\begin{equation}\label{eqn:abalgd}
0 \to E \stackrel{\iota}{\longrightarrow} B \stackrel{\rho}{\longrightarrow} TM \to 0
\end{equation}
such that $E$ carries the zero Lie bracket.
\item \emph{Trivially abelian}: it is abelian and the bundle $E = M \times \R^{\ell}$ in \eqref{eqn:abalgd} is trivial as a flat bundle. Namely there exists a splitting $\sigma : TM \to B$ of the exact sequence
\begin{equation}\label{eqn:centralgd}
0 \to M \times \R^{\ell} \stackrel{\iota}{\longrightarrow} B \to TM \stackrel{\pi}{\longrightarrow} 0 
\end{equation}
such that the connection $\nabla^{\sigma}$ it induces in $M \times \R^{\ell}$ is the standard flat connection $\nabla^0_X(f)=L_X(f)$ for every $X \in \Gamma (TM)$ and $f \in C^{\infty}(M,\R^{\ell})$.
\end{enumerate}
\end{definition}

As we explained in \S \ref{sec:transalgds}, for any splitting $\sigma$ of an abelian Lie algebroid \eqref{eqn:abalgd}, the connection $\nabla^{\sigma}$ of $E$ is flat. Put $V$ the fiber of $E$. It follows from the Ambrose-Singer theorem that the holonomy map $h^{\sigma} : \pi_1(M) \to GL(V)$ has discrete image $h^{\sigma}(\pi_1(M))$. But the group $h^{\sigma}(\pi_1(M))$ may not be trivial, even if the bundle $E$ is a product $M \times V$.  On the other hand, a trivially abelian transitive Lie algebroid \eqref{eqn:centralgd} has a splitting $\sigma$ such that $h^{\sigma}(\pi_1(M))$ is trivial. 

\begin{prop}\label{prop:trivab}
Let $B$ be an abelian transitive Lie algebroid \eqref{eqn:abalgd}. If its base manifold $M$ is simply connected, then $B$ is (isomorphic to) a trivially abelian transitive Lie algebroid.
\end{prop}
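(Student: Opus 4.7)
The plan is to exploit the abelian hypothesis to force flatness of any induced connection on the isotropy, and then use simple connectedness to produce a global flat trivialization.

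First, I would pick any splitting $\sigma : TM \to B$ of the extension \eqref{eqn:abalgd}. As recalled in \S\ref{sec:transalgds}, this gives a connection $\nabla^\sigma$ on $E$ defined by $\nabla^\sigma_X \xi = \iota^{-1}[\sigma(X),\iota(\xi)]$, together with a curvature form $R_\sigma \in \Omega^2(M;E)$. The compatibility identity \eqref{compatibility} gives $R_{\nabla^\sigma} = \operatorname{ad}\circ R_\sigma$. Since $E$ is abelian, $\operatorname{ad} \equiv 0$, so $R_{\nabla^\sigma} = 0$; hence $\nabla^\sigma$ is a flat linear connection on $E$.

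Next, since $M$ is simply connected, the holonomy representation $h^\sigma : \pi_1(M) \to GL(V)$ of the flat bundle $(E,\nabla^\sigma)$ is trivial. Fixing $x_0 \in M$ and a basis of $V := E_{x_0}$, parallel transport along arbitrary paths (well-defined by simple connectedness and the vanishing of holonomy) produces a global frame of $\nabla^\sigma$-flat sections $e_1,\dots,e_\ell$ of $E$. This yields a vector bundle isomorphism $E \cong M \times V$ under which $\nabla^\sigma$ is identified with the standard flat connection $\nabla^0_X f = L_X f$, because the chosen frame is horizontal by construction.

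Finally, I would transport the Lie algebroid structure of $B$ along this isomorphism: using the normal form $B \cong TM \oplus_{R_\sigma} E$ provided by $\sigma$, the identification $E \cong M \times V$ realizes $B$ as a trivially abelian transitive Lie algebroid in the sense of Definition \ref{dfn:centrextns}(b), with splitting still given by $\sigma$. No real obstacle is expected here; the only point that needs a careful word is the existence of the global flat frame, which is a direct consequence of $\pi_1(M)=0$ and the flatness of $\nabla^\sigma$ forced by abelianness.
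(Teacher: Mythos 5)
Your proof is correct, and it reaches the same conclusion as the paper by a noticeably more elementary route. Both arguments hinge on the same two inputs --- the vanishing of $R_{\nabla^\sigma}=\operatorname{ad}\circ R_\sigma$ forced by abelianness (identity \eqref{compatibility}), and simple connectedness of $M$ --- but they convert ``flat connection over a simply connected base'' into ``trivial connection'' differently. You do it in one stroke: path-independence of parallel transport gives a global $\nabla^\sigma$-parallel frame, which simultaneously trivializes $E$ and identifies $\nabla^\sigma$ with $\nabla^0$. The paper instead first fixes an auxiliary trivialization $E\cong M\times\R^{\ell}$, writes $\nabla^\sigma=\nabla^0+\phi$ for a $\mathfrak{gl}(V)$-valued $1$-form $\phi$, checks that flatness of $\nabla^\sigma$ amounts to the Maurer--Cartan equation for $\phi$, i.e.\ that $\phi:TM\to\mathfrak{gl}(V)$ is a morphism of Lie algebroids, and then integrates it to a groupoid morphism $F:\Pi(M)\to GL(V)$; since $\pi_1(M)=0$ the fundamental groupoid is the pair groupoid, and $F$ becomes the gauge transformation carrying $\nabla^\sigma$ to $\nabla^0$. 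Your version avoids the integration step entirely and is the classical ``flat bundle over a simply connected base is trivial'' argument; the paper's version is heavier but stays within the Lie-theoretic toolkit (integration of algebroid morphisms) used elsewhere in the text. One stylistic nitpick: path-independence of parallel transport follows from flatness together with the fact that any two paths with the same endpoints are homotopic, rather than from ``vanishing of holonomy'' as an independent input --- but this does not affect the correctness of your argument, and the final step of transporting the normal form $TM\oplus_{R_\sigma}E$ along the resulting isomorphism is exactly as routine as you claim.
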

\begin{proof}
Since $M$ is simply connected, the flat bundle $E$ can be trivialized using parallel translation, so we may consider a priori that $E=M \times \R^{\ell}$. Let $\sigma$ be a splitting. The connections $\nabla^{\sigma}$ and $\nabla^0$ are related by 
\begin{equation}\label{connections}
\nabla^{\sigma}_X(\mu)=L_X(\mu)+\phi(X)(\mu)
\end{equation}
for some 1-form $\phi : TM \to \operatorname{End}(E)=M \times \mathfrak{gl}(V)$. 

\paragraph{Claim:} The map $\phi$ satisfies the Maurer-Cartan equation $d\phi + [\phi,\phi]=0$.

Let $X, Y \in \Gamma(TM)$ and a section $\mu : M \to \mathfrak{gl}(V)$ of $\operatorname{End}(E)$. From 
\eqref{connections} we find:
\begin{equation}\label{curvatures}
\begin{split}
R_{\nabla^{\sigma}}(X,Y)(\mu) &= R_{\nabla^0}(X,Y)(\mu) + (\phi(X)(L_Y(\mu))-L_Y(\phi(X)(\mu))) \\ &- (\phi(Y)(L_X(\mu)-L_X(\phi(Y)(\mu))) + [\phi(X),\phi(Y)](\mu) - \phi([X,Y])(\mu)
\end{split}
\end{equation}
From the chain rule we have $L_X(\phi(Y))(\mu) = L_X(\phi(Y)(\mu))-\phi(Y)(L_X(\mu))$. Plugging this into \eqref{curvatures} we conclude.

The Maurer-Cartan equation says that $\phi$ is a morphism of Lie algebroids, whence it integrates to a morphism of Lie groupoids $F : \Pi(M) \to GL(V)$, where $\Pi(M)$ is the fundamental groupoid of $M$. Since $\pi_1(M)=0$, the map $(s,t) : \Pi(M) \to M \times M$ is an isomorphism, whence the map $F$ can be seen as an isomorphism of vector bundles $F : M \times V \to M \times V$. Now consider the connection $\nabla_X(F(\mu)) = F(L_X(\mu))$. Whence, composing $F$ with the inclusion map $M \times V \to B$ we reformulate $B$ so that the connection $\nabla^{\sigma}$ becomes the standard flat connection $\nabla^0$.
\end{proof}

\begin{cor}\label{cor:trivab}
Let $B$ be an abelian transitive Lie algebroid over $M$ and $\pi : \widetilde{M} \to M$ the universal cover of $M$. Then the pullback algebroid $\pi^{!}B$ is trivially abelian.
\end{cor}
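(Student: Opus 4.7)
The plan is to apply Proposition \ref{prop:trivab} directly to the pullback algebroid $\pi^{!}B$. The whole content of the corollary is reducing it to the simply connected case already handled.

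First, I would verify that $\pi^{!}B$ is again an abelian transitive Lie algebroid, now over $\widetilde{M}$. Since $\pi:\widetilde{M}\to M$ is a covering map, in particular a surjective submersion (indeed a local diffeomorphism), Mackenzie's pullback construction (\cf \cite{MK2}) produces a transitive Lie algebroid $\pi^{!}B$ over $\widetilde{M}$ fitting in an extension
$$0 \to \pi^{\ast}E \to \pi^{!}B \to T\widetilde{M} \to 0,$$
whose isotropy bundle is the ordinary pullback vector bundle $\pi^{\ast}E$. The Lie bracket on each fibre $(\pi^{\ast}E)_{\tilde{x}} = E_{\pi(\tilde{x})}$ coincides with the bracket on the corresponding fibre of $E$, so it is identically zero. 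Hence $\pi^{!}B$ is an abelian transitive Lie algebroid in the sense of Definition \ref{dfn:centrextns}.

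Second, the base $\widetilde{M}$ is simply connected by the defining property of the universal cover. Therefore Proposition \ref{prop:trivab} applies to $\pi^{!}B$ and yields that $\pi^{!}B$ is isomorphic to a trivially abelian transitive Lie algebroid, as required.

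There is no real obstacle here: the only thing to be careful about is that the two properties used in Proposition \ref{prop:trivab}, namely \emph{abelian} and \emph{transitive}, are preserved under pullback along a covering (equivalently, along any surjective submersion). Both preservation statements are immediate from the fibrewise description of $\pi^{!}B$, so the corollary follows at once.
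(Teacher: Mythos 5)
Your argument is correct and is exactly the intended one: the paper states this as an immediate corollary of Proposition \ref{prop:trivab} (with no written proof), relying on precisely the two observations you make — that pullback along the covering preserves the abelian transitive structure, and that $\widetilde{M}$ is simply connected. Nothing further is needed.
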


\begin{ex}\label{exs:extns}
Consider a closed 2-form $\omega$ on $M$. Then $B_{\omega} = TM \oplus_{\omega} (M \times \R)$ is a (transitive) Lie algebroid with anchor map the projection to $TM$ and Lie bracket 
\begin{equation}\label{brack} 
[X \oplus V,Y\oplus W] = [X,Y] \oplus \{X(W)-Y(V)-\omega(X,Y)\}.
\end{equation}
 We obtain the abelian extension $0 \to M \times \R \to B \to TM \to 0$. 
 This algebroid is strongly related with geometric prequantization : Mackenzie in \cite[Thm 8.3.9]{MK2} showed, in the simply connected case, that  $B_{\omega}$ is \emph{integrable} if and only if the de Rham cohomology class of $\omega$ belongs to the image, inside $H^2(M;\R)$, of $\check{H}^2(M;D)$ for a discrete subgroup $D \subset \R$. Since such a subgroup is always in the form $a\Z$ for its minimum positive number $a$, then, up to scale we're saying that $\omega$ has to be an \emph{integral} form. Crainic explained the precise relation between \emph{integrality} and \emph{integrability} in 
 \cite{Crainic:2004aa}.
 
  If $M$ is simply connected, then $B_{\omega}$ is trivially abelian. Otherwise, consider the universal cover $\pi : \widetilde{M} \to M$. The pullback of $B_{\omega}$ to $\widetilde{M}$ is the transitive algebroid $B_{\pi^*\omega} = T\widetilde{M}\oplus_{\pi^*\omega}(\widetilde{M} \times \R)$ and it is trivially abelian.
\end{ex}

\begin{exs}
Recall from \cite{MK2} that the classification given in \S \ref{sec:transalgds} holds for Lie algebroid extensions beyond the transitive case. In view of this, we give two examples of abelian extensions as such. We will not be concerned with these cases here, but the results we give in this paper might be generalized to them.
\begin{enumerate}
\item Given a transitive Lie algebroid \eqref{eqn:algd}, consider the adjoint representation $\operatorname{ad} : B \to \mathcal{D}[L]$ (see \cite[Ex. 8.3.3]{MK2}). Its image $\operatorname{ad}[L]$ can be identified with the quotient $B/ZL$. Moreover, the Lie algebroid $\mathcal{D}[L]$ of covariant differential operators integrates to the frame groupoid of $L$, and the subalgebroid $\operatorname{ad}[L]$ integrates to the groupoid of inner automorphisms between fibers of $L$. Whence, any transitive Lie algebroid can be written as an abelian extension 
\begin{equation}\label{eqn:abexint}
0 \to ZL \to B \to A(G) \to 0 
\end{equation}
where $G$ is a transitive Lie groupoid. 

\item It was shown in \cite{AZ3} that every almost regular Poisson structure $(M,\Pi)$ is an abelian extension of Lie algebroids 
\begin{equation}\label{eqn:almregextn}
0 \to E \to T^{\ast}M \stackrel{\Pi}{\longrightarrow} A(H(\cF)) \to 0
\end{equation}
where $H(\cF)$ is the holonomy groupoid of the associated symplectic foliation on $M$.
\end{enumerate}
\end{exs}



\subsection{The monodromy map}\label{sec:CFobstr}

We recall here the integrability process for transitive Lie algebroids given by Crainic and Fernandes \cite{CFLie} and (independently) Iglesias \cite{Iglesias:1995aa}. We also give properties that will be needed later.

First recall that a Lie algebroid $B$ over $M$ is integrable if there exists a Lie groupoid $\cG(B) \gpd M$ whose Lie algebroid $A\cG(B)$ is $B$.

Consider a transitive Lie algebroid $0 \to L \to B \to TM \to 0$. Let $m\in M$ and $G(L_m)$ be a simply connected Lie group with Lie algebra $L_m$. Put $Z(G(L_m))$ be the center of $G(L_m)$. It was shown in \cite{CFLie} that the obstruction to the integrability of $B$ is controlled by a certain group homomorphism:
\[\partial_m : \pi_2(M,m) \lra Z(G(L_m)),\]
called the \emph{monodromy map}. Namely, $B$ is integrable if its image i.e, the \emph{monodromy group}
\[\widetilde{N}_m:=\partial_m(\pi_2(M,m))\subset  Z(G(L_m))\]
is a discrete subgroup of $Z(G(L_m))$. More specifically, given $B$ one can always construct a topological groupoid $\cC \gpd M$ with simply connected $s$-fibers. The monodromy group is discrete if and only if $\cC$ is a Lie groupoid and its Lie algebroid is $B$. 

The construction of the monodromy map mimics the construction of the boundary map in homotopy: a smooth sphere in $M$ is lifted to a Lie algebroid morphism from the disk to $B$; the restriction to the boundary gives a curve in $L_m$; using right translation, this curve integrates to a unique curve in $G(L_m)$ starting at $e$; the monodromy map returns the end point of this curve (see \cite{CFLie} for details).

Let us note some important properties of the monodromy group:
\begin{enumerate}
\item Let $Z(G(L_m))^{\circ}$ be the connected component of the identity for the group $Z(G(L_m))$. The restriction of the exponential map provides a group isomorphism $\operatorname{exp}_{|} : Z(L_m) \to Z(G(L_m))^{\circ}$. Whence $\operatorname{exp}^{-1}$ identifies the intersection  $\widetilde{N}_m \cap Z(G(L_m))^{\circ}$ with a subgroup $N_m$ of $Z(L_m)$. Obviously, $N_m$ is discrete if and only if $\widetilde{N}_m$ is discrete. This justifies why, abusing the terminology we call $N_m$ the monodromy group. Also by abuse of notation we will call monodromy map the composition $(\operatorname{exp}_{|}^{-1}\circ \partial_m) : \pi_2(M,m) \to N_m$. We continue to denote this map $\partial_m$ as well.
\item The monodromy map $\partial_m : \pi_2(M,m) \to ZL_m$ is a group morphism from an abelian group to a vector space. It follows that $\partial_m([\gamma])=0$ for every torsion element $[\gamma] $ of $\pi_2(M,m)$.
\item As noted in \cite{CFLie}, the monodromy map can be computed explicitly when the algebroid $B$ is abelian, namely when $ZL=L$. Considering a splitting $\sigma:TM \to B$ with associated curvature $\omega \in \Omega^2(M,L)$, the monodromy map becomes:
\begin{equation}\label{eqn:boundary}
\partial_m:\pi_2(M,m) \lra G(L_m), \quad
[\gamma]\longmapsto \exp\big(\int_{\mathbb{S}^2}\gamma^*(\omega)\big).
\end{equation}
The integral is defined for a smooth representative $\gamma:\mathbb{S}^2 \lra M$ with $\gamma(\textrm{north pole})=m$ as follows: by pulling back $L$ to the sphere we get a flat bundle on $\mathbb{S}^2$ that we trivialize by parallel transport: $\gamma^*(L)\cong \mathbb{S}^2 \times L_m$. Then $\gamma^*(\omega)\in \Omega^2(\mathbb{S}^2)\otimes L_m$ can be integrated over $\mathbb{S}^2$ to an element of $L_m$. Alternatively, one can pull back $L$ to the universal cover $p:\widetilde{M}\to M$ where it can be trivialized by parallel transport, $p^*(L)\simeq \widetilde{M}\times L_m$, and then $p^*(\omega)\in \Omega^2(\widetilde{M})\otimes L_m$ can be integrated canonically over $\pi_2(\widetilde{M},\tilde{m})$. On the other hand, the projection $p$ induces an isomorphism $\pi_2(\widetilde{M},\tilde{m})\simeq\pi_2(M,m)$, for $\tilde{m}\in p^{-1}(m)$. 
\item When $B$ is also trivially abelian, one can prove immediately that the monodromy map $\partial_m$ factors trough the Hurewicz map $\pi_2(M,m) \longrightarrow H_2(M)$. Let $\omega \in \Omega^2(M;\R^\ell)$ be the curvature form. We have the following commutative diagram
$$\xymatrix{\pi_2(M,m)\ar[d]^{\iota}\ar[rr]^\partial & {} &\R^{\ell}\\
\pi_2(M,m)\otimes_{\mathbb{Z}}\mathbb{R}\ar[rr]_{H \otimes \operatorname{Id}}&{}&H_2(M;\R)\ar[u]_{\int\omega}}$$
in which the map $\iota$ is $[\gamma] \mapsto \gamma \otimes_{\Z}1$; the map $H : \pi_2(M) \to H_2(M)$ is the Hurewicz morphism and the $\int \omega$ is the integration map.

\item An important property of the monodromy map is that it behaves \textbf{functorially}. Namely, for any morphism $f : B\to B'$ of transitive algebroids which covers the identity, the induced map between the simply connected Lie groups ${F_m:G(L_m)\to G(L_m')}$ intertwines the monodromy maps:
\[
\xymatrix{
\pi_2(M,m) \ar[d]_{\partial^{B}_m} \ar[dr]^{\partial^{B'}_{m}} &\\
Z(G(L_m)) \ar[r]^{F_m}& Z(G(L'_m))}
\]
Denoting by $N'_m$ the monodromy group of $B'$ at $m$, the above implies that
\begin{equation}\label{eq:monodromy_groups}
F_m(N_m)=N'_m.
\end{equation}
\end{enumerate}


\subsection{Integrable lifts and the prerequisite for their existence}\label{sec:integrlifts}

The main problem studied in this paper is the following: given a transitive Lie algebroid $B$ over $M$, construct a integrable transitive Lie algebroid $\widehat{B}$ over $M$ and a surjective morphism:
\[0\lra \ker(f)\lra \widehat{B\,}\stackrel{f}{\lra} B\lra 0.\]
Such an extension will be called an \textbf{integrable lift} of $B$. We will present two different constructions of such integrable lifts in the case $M$ is simply connected and $B$ is abelian. In principle, we are interested in constructions which are canonical and have the minimal dimension possible. 

First, let us remark that integrable lifts do not always exist:
\begin{ex}\label{counter}
Let $M$ be a smooth manifold, and let $m\in M$. Consider the group homomorphism \[\mathrm{H}_{\R}:\pi_2(M,m)\lra H_2(M,\mathbb{R})\]
obtained as the composition of the Hurewicz map $\mathrm{H}:\pi_2(M,m)\to H_2(M,\mathbb{Z})$ and of the canonical map $H_2(M,\mathbb{Z})\to H_2(M,\mathbb{R})$. Assume that $\mathrm{H}_{\R}(\pi_2(M,m))$ spans an infinite dimensional subspace of $H_2(M,\mathbb{R})$. Thus, there is a sequence in $\{\gamma_k\}_{k\geq 1}$ in $\pi_2(M,m)$ such that the elements $\{\mathrm{H}_{\R}(\gamma_k)\}_{k \geq 1}$ are linearly independent in $H_2(M,\mathbb{R})$. Concretely, one can take $M=\R^3\backslash \mathbb{Z}^3$. Fix numbers $a_{k}\in \mathbb{R}$ which are linearly independent over $\mathbb{Q}$. By the De Rham Theorem we have
\[H^2_{\mathrm{dR}}(M,\mathbb{R})\cong \operatorname{Hom}(H_2(M,\mathbb{R}),\mathbb{R})\]
Whence there exists a closed 2-form $\omega\in \Omega^2(M)$ such that
\[\int_{\mathbb{S}^2}\gamma_k^*(\omega)=a_k, \ \ \textrm{for all}\ \ k\geq 1.\]
To the closed 2-form $\omega$, we associate the algebroid $B_\omega$ in normal form with bracket \eqref{brack} and monodromy group:
 \[N_m=\Big\{\int_{\mathbb{S}^2}\gamma^*(\omega)\ :\ [\gamma]\in \pi_2(M,m)\Big\}\subset \R.\]
We claim that there is no integrable lift of $B_\omega$. On the contrary, assume that such a lift $f : \widehat{B\,}\to B_{\omega}$ existed, and denote its isotropy Lie algebra $L_m$. Then $f$ restricts to a Lie algebra map $f_m : L_m\to \R$, which integrates to a group homomorphism between the simply connected Lie groups: $F_m:G(L_m)\to \R$. Let $\widehat{{N}}_m$ be the monodromy group of $\widehat{B\,}$. Our assumption is that $\widehat{N}_m$ is discrete, and so $G(L_m)/\widehat{N}_m$ is a connected Lie group. Since any connected Lie group is homotopy equivalent to its maximal compact Lie group \cite{Malcev45}, compact manifolds have finitely generated fundamental group, and $\pi_1(G(L_m)/N_m,e)\simeq \widehat{N}_m$, it follows that $\widehat{N}_m$ is a finitely generated abelian group. Thus also $N_m=F_m(\widehat{N}_m)$ is finitely generated, which is impossible because $N_m$ contains the sequence $\{a_k\}_{k\geq 1}$ of elements which are linearly independent over $\mathbb{Q}$.
\end{ex}

The arguments in the example above can be easily generalized to any abelian transitive Lie algebroid:
\begin{prop}\label{prop:fingen}
Let $B$ be an abelian transitive Lie algebroid. If the monodromy group of $B$ is not finitely generated, then $B$ does not admit an integrable lift.
\end{prop}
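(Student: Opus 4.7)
The plan is to adapt the argument from Example \ref{counter} verbatim. Suppose for contradiction that an integrable lift $f : \widehat{B} \to B$ exists, with isotropy Lie algebras $\widehat{L}_m$ and $L_m$ at $m$, and let $f_m : \widehat{L}_m \to L_m$ be the induced Lie algebra morphism. Integrating $f_m$ gives a Lie group morphism $F_m : G(\widehat{L}_m) \to G(L_m)$ between the simply connected integrations, which by the functoriality of the monodromy map (property 5 in \S \ref{sec:CFobstr}) satisfies
\[
F_m(\widehat{N}_m) = N_m,
\]
where $\widehat{N}_m$ is the monodromy group of $\widehat{B}$ at $m$.

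Next I would reduce the problem to showing that $\widehat{N}_m$ is finitely generated; since any quotient of a finitely generated abelian group is finitely generated, $N_m = F_m(\widehat{N}_m)$ would then also be finitely generated, contradicting the hypothesis. Since $\widehat{B}$ is assumed integrable, the group $\widehat{N}_m$ is a discrete subgroup of $Z(G(\widehat{L}_m))$, and in particular a discrete normal subgroup of $G(\widehat{L}_m)$. Hence the quotient $Q := G(\widehat{L}_m)/\widehat{N}_m$ is a connected Lie group, and since $G(\widehat{L}_m)$ is simply connected, the covering $G(\widehat{L}_m) \to Q$ identifies $\pi_1(Q,e) \cong \widehat{N}_m$.

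The final step is to argue that $\pi_1(Q,e)$ is finitely generated. For this one invokes the Malcev--Iwasawa theorem: the connected Lie group $Q$ is diffeomorphic (hence homotopy equivalent) to $K \times \R^n$, where $K$ is a maximal compact subgroup. Since $K$ is a compact manifold, $\pi_1(K)$ is finitely generated, and therefore so is $\pi_1(Q) \cong \widehat{N}_m$. This yields the required contradiction.

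The argument is essentially a straightforward abstraction of the counterexample, so no new obstacle is expected; the only subtlety I would double-check is that when $\widehat{B}$ is not assumed abelian, the functoriality statement $F_m(\widehat{N}_m) = N_m$ indeed applies as stated (the monodromy groups live inside the centers $Z(G(\widehat{L}_m))$ and $Z(G(L_m)) = G(L_m)$, and $F_m$ restricts between them), and that the quotient $G(\widehat{L}_m)/\widehat{N}_m$ is indeed well defined as a Lie group because $\widehat{N}_m$ is central and discrete.
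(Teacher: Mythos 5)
Your proposal is correct and follows essentially the same route as the paper, which simply states that the argument of Example \ref{counter} generalizes verbatim: functoriality of the monodromy map gives $F_m(\widehat{N}_m)=N_m$, discreteness of $\widehat{N}_m$ realizes it as $\pi_1$ of the connected Lie group $G(\widehat{L}_m)/\widehat{N}_m$, and Malcev's theorem forces this to be finitely generated, contradicting the hypothesis. Your added checks (centrality and discreteness of $\widehat{N}_m$ making the quotient a Lie group, and the restriction of $F_m$ to the centers) are exactly the right points to verify and cause no difficulty.
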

\begin{remark}\label{rmk:fingen}
The monodromy group of $B$ is the image of the monodromy map, whose domain is $\pi_2(M)$. Due to Prop. \ref{prop:fingen}, the lifting problem is allowed to have a solution under the (rather mild) topological assumption that the group $\pi_2(M)$ has finite  rank.
\end{remark}

\section{The Almeida-Molino integrable lift}\label{sec:AlmMol}

We give here a solution to the lifting problem which is inspired by the Almeida-Molino non-integrable algebroids \cite{AlmeidaMolino}, as well as the factorization of the monodromy map with the Hurewicz morphism. In fact, our construction is really an explanation of the Almeida-Molino algebroids.

The ``Almeida-Molino'' integrable lift can be constructed under the following assumptions for the transitive Lie algebroid $B \to M$:

\begin{assumptions}\label{assumption2}
\begin{enumerate}
	\item The group $\pi_2(M)$ has finite rank. 
	\item The Hurewicz map with real coefficients $\pi_2(M)\otimes \R  \longrightarrow H_2(M)\otimes_{\Z}\R$ is an isomorphism.
	\item $B$ is a trivially abelian transitive Lie algebroid $\xymatrix{0\ar[r]&M \times \R^{\ell}\ar[r]&B\ar[r]&TM\ar[r]&0}$.
\end{enumerate}	
\end{assumptions}
For instance, the second and third items in assumptions \ref{assumption2} are satisfied if $M$ is simply connected. As for the first assumption, as mentioned in Remark \ref{rmk:fingen}, it ensures that a solution of the lifting problem of $B$ is allowed.

Without further ado, we adopt assumptions \ref{assumption2} throughout the current section.

\subsection{Dimension and the second homotopy group}\label{sec:irrdeg}

An interesting feature of this constructive approach to the lifting problem, is that it uses a canonical dimension induced by the group $\pi_2(M)$.

 Let $r=\operatorname{rank} \pi_2(M)$ and put $\tau \pi_2(M)$ the torsion subgroup of $\pi_2(M)$. The number $r$ is the cardinality of any maximally independent system (over $\mathbb{Z}$), say $\{[\gamma_1],...,[\gamma_r]\} \subset \pi_2(M,m)$ such that the images of the $\gamma_i$ in $\pi_2(M)/\tau\pi_2(M)$ generate $\pi_2(M)/\tau\pi_2(M)$. Since $M$ is a manifold we can assume that every $\gamma_i: \mathbb{S}^2 \to M$ is a smooth map. We call $\Gamma=(\gamma_1,...,\gamma_r)$ this choice of a system of generators. 

From the assumption on the Hurewicz map, and by the universal coefficients theorem, we get an isomorphism of vector spaces $$H \otimes \operatorname{Id}_{\mathbb{R}^{\ell}} : \pi_2(M)\otimes_{\Z}\R^{\ell} \longrightarrow H_2(M;\Z)\otimes_{\Z}\R^{\ell}\cong H_2(M;\R^\ell).$$
Let $e_j$, $j=1,...,\ell$ be the canonical basis of $\mathbb{R}^{\ell}$. Since the $\gamma_i$ are a maximal independent set, we get the following basis of the homology:
$$\mu_{ij}:= H\otimes \operatorname{Id}_{\mathbb{R}^{\ell}}([\gamma_i]\otimes e_j) \in H_2(M;\mathbb{R}^{\ell}),\quad i=1,...,r, \, j=1,...,\ell$$
This defines the first arrow in the following chain of isomorphisms:
\begin{equation}\label{chain}
\pi_2(M)\otimes_{\mathbb{Z}}\mathbb{R}^{\ell} \longrightarrow H_2(M,\mathbb{R}^{\ell}) \longrightarrow H_2(M;\mathbb{R}^{\ell})^* \longrightarrow H^2_{dR}(M;\mathbb{R}^{\ell}).
\end{equation}
The second arrow here is the identification of a vector space with its dual, defined in terms of the basis $\mu_{ij}$. The last one is the inverse of the de Rham isomorphism $\operatorname{dR}:H^2_{dR}(M;\mathbb{R}^{\ell}) \longrightarrow H_2(M;\mathbb{R}^{\ell})^* $. Following the chain we end up with a corresponding basis of the de Rham cohomology:
$$[\varphi_i \otimes e_j]\in H^2_{\operatorname{dR}}(M;\mathbb{R}^{\ell}), \quad i=1,...,r, \, j=1,...,\ell.$$
This basis is defined by closed forms $\varphi_i \in \Omega^2(M;\mathbb{R})$. Its simple tensors form follows by the definition of the employed maps. Also the coefficients $\varphi_i$ are normalised by the de Rham theorem:
\begin{equation}\label{normalization}
\int_{\gamma_{j}}\varphi_i = \delta_{ij}, \quad i,j=1,...,r.
\end{equation}
Finally, choose a splitting $\sigma : TM \to B$ and consider its associated curvature $\omega$. Recall that the algebroid $B$ is isomorphic to $B_{\omega} = TM \oplus_{\omega}(M \times \R^{\ell})$ and also that 
the Bianchi identity is equivalent to the fact that $\omega$ is a closed 2-form in $\Omega^2(M,\R^{\ell})$

Using the basis $[\varphi_i \otimes e_j]$, the de Rham class of $\omega$ is written as:
\begin{equation}\label{omegainbasis}
[\omega]=\sum_{i=1}^r \sum_{ j=1}^{\ell} \alpha_{ij}[\varphi_i \otimes e_j], \quad \alpha_{ij} \in \mathbb{R}.  
\end{equation}
\begin{remark}
We can assume that the identity \eqref{omegainbasis} holds true also at the level of forms. Otherwise $\omega=\sum_{i=1}^r \sum_{ j=1}^{\ell} \alpha_{ij}\varphi_i \otimes e_j -d\rho$ for some $\rho \in \Omega^1(M;\mathbb{R}^{\ell})$ and we could work with the algebroid $B_{\omega'}$ where $\omega'=\omega + d\rho$. This is isomorphic to $B_\omega$ under the isomorphism $X \oplus V \mapsto X \oplus (V+ \rho(X))$. \\
\end{remark}
Now we consider the number $n\big{(}\Gamma\big{)}$ of non zero rows of the matrix $[\alpha_{ij}]\in M_{r \times {\ell}}(\mathbb{R})$. Clearly it depends on the choice $\Gamma.$
 If we define the vectors $\Theta_i:= \sum_{j=1}^{\ell} \alpha_{ij}e_j \in \mathbb{R}^{\ell}$ then we can write 
$$\omega= \sum_{i=1}^r \varphi_{i} \otimes \Theta_i.$$ Whence $n(\Gamma)$ is the number of non-zero vectors $\Theta_i$.

Assuming the cohomology class of $\omega$ is non-trivial we have $1\leq n(\Gamma)\leq r$. It turns out that the dimension $n(\Gamma)$ is related with the integrability of $B_{\omega}$.

\begin{prop}\label{prop:irrdeg}
Put $n(B_{\omega}):=\operatorname{min}\{n(\Gamma)\}$ when we vary all the possible choices $(\Gamma)$. If $n(B_{\omega})=1$ then $B_{\omega}$ is integrable. \end{prop}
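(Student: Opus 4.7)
The plan is to compute the monodromy group $N_m$ explicitly in the basis provided by $\Gamma = (\gamma_1,\ldots,\gamma_r)$, and check that under the hypothesis $n(\Gamma) = 1$ it reduces to a $\Z$-lattice of rank at most one in $\R^{\ell}$, hence a discrete subgroup, so that $B_\omega$ is integrable by the Crainic--Fernandes criterion recalled in \S\ref{sec:CFobstr}.

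First, I would choose a system $\Gamma = (\gamma_1,\ldots,\gamma_r)$ of generators of $\pi_2(M)/\tau\pi_2(M)$ realizing the minimum $n(\Gamma) = n(B_\omega) = 1$. Up to relabelling, this means $\Theta_1 \ne 0$ and $\Theta_i = 0$ for $i \ge 2$, so that $\omega$ is cohomologous to $\varphi_1 \otimes \Theta_1$ (which we may assume equal to $\omega$ by the remark preceding the statement). Using the normalisation condition \eqref{normalization}, I compute the periods
\begin{equation*}
\int_{\mathbb{S}^2} \gamma_k^\ast(\omega) \;=\; \sum_{i=1}^r \Bigl(\int_{\gamma_k}\varphi_i\Bigr)\,\Theta_i \;=\; \Theta_k, \qquad k = 1,\ldots,r,
\end{equation*}
which equal $\Theta_1$ for $k=1$ and vanish otherwise.

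Next, I would extract the general period from these. By the factorisation through the Hurewicz map (property~(4) in \S\ref{sec:CFobstr}), the monodromy map $\partial_m \colon \pi_2(M,m) \to \R^{\ell}$ descends to the free part $\pi_2(M)/\tau\pi_2(M)$, which is generated by $[\gamma_1],\ldots,[\gamma_r]$ over $\Z$. Hence for any $[\gamma] \in \pi_2(M,m)$, writing $[\gamma] = \sum_{k} c_k[\gamma_k] + [\tau]$ with $c_k \in \Z$ and $[\tau]$ a torsion class, the explicit formula \eqref{eqn:boundary} gives
\begin{equation*}
\partial_m([\gamma]) \;=\; \sum_{k=1}^r c_k\,\Theta_k \;=\; c_1\,\Theta_1 \;\in\; \Z\,\Theta_1.
\end{equation*}
Therefore the monodromy group satisfies $N_m \subseteq \Z\Theta_1$, which is a cyclic subgroup of $\R^{\ell}$ generated by a single non-zero vector, hence discrete.

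By the Crainic--Fernandes integrability criterion, discreteness of $N_m$ is equivalent to integrability of $B_\omega$, which concludes the argument. There is no real obstacle here: the only delicate point is that the periods of $\omega$ over the chosen generators of $\pi_2(M)$ already determine the entire monodromy group, and this is exactly what the factorisation through Hurewicz (together with the fact that torsion classes are sent to zero in the target vector space) provides.
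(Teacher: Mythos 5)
Your proof is correct and follows essentially the same route as the paper's: both reduce to the observation that for a choice $\Gamma$ realizing $n(\Gamma)=1$ one has $\omega=\varphi_1\otimes\Theta_1$, so the normalization \eqref{normalization} forces the monodromy group to be $\Z\Theta_1$, which is discrete. The only difference is that you spell out explicitly why the periods over the generators $\gamma_1,\dots,\gamma_r$ determine the whole monodromy group (factorization through Hurewicz and vanishing on torsion), a step the paper leaves implicit.
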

\begin{proof}
Let $(\Gamma)$ be a choice realizing $n(\Gamma)=n(B_{\omega})=1$. This means that in the corresponding matrix $[\alpha_{i,j}]$ only one row is non-zero.
Let's assume that it is the first row. Therefore $\omega = \varphi_1 \otimes \Theta_1$ and the image of the monodromy map is the image of the map 
$\mathbb{Z}^k \ni (x_1,...,x_r) \longmapsto \sum_{\ell=1}^r x_{\ell}\int_{\gamma_{\ell}}\varphi_1 \otimes \Theta_1 = x_1 \Theta_1$ which is discrete. 
\end{proof}

\subsection{Construction of the Almeida-Molino integrable lift}

Now we can give the explicit construction of an integrable lift for $B_{\omega}$. This will be done by adding as many copies of the isotropy Lie algebra, as the dimension $n(B_{\omega})$ discussed in Prop. \ref{prop:irrdeg}.

\begin{thm}\label{simplyconnected}
Let $M$ be a manifold whose $\pi_2(M)$ has finite rank $r$ and such that the Hurewicz map with real coefficients $\pi_2(M)\otimes \R  \longrightarrow H_2(M)\otimes_{\Z}\R$ is an isomorphism.
Let  $$\xymatrix{0\ar[r]&M \times \R^\ell \ar[r]  &B\ar[r]&TM\ar[r]&0}$$ be a trivially abelian transitive algebroid. Assume $n(B) \geq 2$. Then,
 for every choice $(\Gamma)$ realizing $n(\Gamma)=n(B)$ we can construct an extension of transitive Lie algebroids
$$\xymatrix{0\ar[r]&K\ar[r]&B_{\overline{\omega}}\ar[r]^\mu& B_{\omega} \ar[r]&0}$$
 where 
 $B_{\overline{\omega}}$ 
 is integrable. In particular 
$B_{\overline{\omega}}=TM\oplus_{\overline{\omega}} {\R}^{n(B) \times \ell}$, for a closed 2-form $\overline{\omega} \in \Omega^2(M, {\R}^{n(B) \times \ell)})$ which is determined by $(\Gamma)$. 
 \end{thm}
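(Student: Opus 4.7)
The plan is to construct $\bar\omega$ as a vector-valued form that separates the obstructing contributions to the monodromy of $\omega$. Fix a choice $\Gamma = (\gamma_1,\ldots,\gamma_r)$ of smooth spheres realizing $n(\Gamma) = n(B)$ and the corresponding closed 2-forms $\varphi_1,\ldots,\varphi_r$ normalized by \eqref{normalization}. After relabelling, assume that $\Theta_1,\ldots,\Theta_n$ (with $n = n(B)$) are the non-zero coefficient vectors, so that, invoking the remark following \eqref{omegainbasis}, we may write at the level of forms
\[
\omega = \sum_{k=1}^{n}\varphi_k \otimes \Theta_k,
\qquad
\bar\omega := (\varphi_1\otimes\Theta_1,\ldots,\varphi_n\otimes\Theta_n) \in \Omega^2(M;\R^{n\ell}).
\]
Each block of $\bar\omega$ is closed, so the normal form recipe \eqref{bracket} produces a trivially abelian transitive Lie algebroid $B_{\bar\omega} = TM\oplus_{\bar\omega}(M\times \R^{n\ell})$.

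Next I would define $\mu : B_{\bar\omega}\to B_\omega$ by the identity on the $TM$-summand and by the summation map $s : \R^{n\ell}\to \R^\ell$, $(v_1,\ldots,v_n)\mapsto v_1+\cdots+v_n$, on the isotropy. Because both algebroids are in normal form, checking that $\mu$ preserves the bracket reduces to the pointwise identity $s\circ\bar\omega = \omega$, which holds by construction. The kernel $K := M\times\ker(s)$ is a trivial LAB of rank $(n-1)\ell$, and $\mu$ is surjective on $TM$ trivially and on the isotropy because $s$ is. This yields the required extension $0\to K\to B_{\bar\omega}\stackrel{\mu}{\longrightarrow} B_\omega\to 0$.

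Finally I would verify integrability of $B_{\bar\omega}$ via the Crainic--Fernandes criterion and the explicit formula \eqref{eqn:boundary}. Since the monodromy of a trivially abelian algebroid takes values in a real vector space, it vanishes on torsion and its image is the $\Z$-span of the vectors $\partial([\gamma_j]) = \int_{\gamma_j}\bar\omega$ for $j=1,\ldots,r$. From $\int_{\gamma_j}\varphi_i = \delta_{ij}$ we get
\[
\partial([\gamma_j]) = \bigl(\delta_{j1}\Theta_1,\ldots,\delta_{jn}\Theta_n\bigr) \in \R^{n\ell},
\]
which for $j\le n$ sits in the $j$-th coordinate block with value $\Theta_j$ and vanishes for $j>n$. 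Since these nonzero contributions live in pairwise disjoint coordinate blocks of $\R^{n\ell}$, the subgroup they generate is the direct sum of $n$ rank-one lattices and is therefore discrete.

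The main obstacle is precisely this last discreteness argument: the whole purpose of enlarging the isotropy from $\R^\ell$ to $\R^{n\ell}$ is to redistribute the vectors $\Theta_1,\ldots,\Theta_n$ into independent coordinate directions, where their $\Z$-linear combinations can no longer accumulate. Functoriality of the monodromy map (property (e) of \S \ref{sec:CFobstr}) then confirms consistency, since pushing the discrete lattice forward along $s$ recovers the original (possibly non-discrete) monodromy $\{x_1\Theta_1+\cdots+x_n\Theta_n : x_i\in\Z\}\subset\R^\ell$ of $B_\omega$.
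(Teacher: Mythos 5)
Your proposal is correct and follows essentially the same route as the paper: the same decomposition $\omega=\sum\varphi_i\otimes\Theta_i$ with the normalization \eqref{normalization}, the same block form $\overline{\omega}=(\varphi_1\otimes\Theta_1,\ldots,\varphi_n\otimes\Theta_n)$, the same summation morphism $\mu$, and the same computation showing the monodromy image sits in disjoint coordinate blocks. The only cosmetic difference is that you phrase discreteness as the image being a direct sum of rank-one lattices, whereas the paper bounds $\|\partial(m)-\partial(m')\|$ from below by $\min_i\|\Theta_i\|$; these are equivalent.
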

 
\begin{proof}
\begin{enumerate}
\item Put ${\bf{n}}:=n(B)$. Let $\Gamma=([\gamma_1],...,[\gamma_r]) \in \pi_2(M)^r$ be any such independent set. By the previous discussion we have 
$\omega= \sum_{i=1}^r \varphi_{i} \otimes \Theta_i$. We are assuming that only ${\bf{n}}$ of the vectors $\Theta_i$ are non zero; we can arrange for them to be exactly $\Theta_1,...,\Theta_{{\bf{n}}}$.  
Now let's define
$$\overline{\omega}:=(\varphi_1 \otimes \Theta_1,...,\varphi_{{\bf{n}}} \otimes \Theta_{{\bf{n}}}) \in \Omega^2(M;(\mathbb{R}^\ell)^{{\bf{n}}}) = \Omega^2(M;\R^{\bf{n} \times \ell}),$$ and consider:
\begin{enumerate}
\item The algebroid $B_{\overline{\omega}}:=TM \oplus_{\overline{\omega}} \R^{\bf{n}\times \ell}$.
\item The map $\mu : B_{\overline{\omega}} \to B_{\omega}$ is the addition $(X, \xi_1,...,\xi_{{\bf{n}}}) \mapsto (X, \xi_1+ \cdots +\xi_{{\bf{n}}}).$ It is clearly a morphism of algebroids.
 \end{enumerate}
 
\paragraph{Claim:} The algebroid $B_{\overline{\omega}}$ is integrable. 

To prove this we have to show that the monodromy group $N_m(B_{\overline{\omega}})$ is discrete. With respect to the system $\Gamma$ of $\pi_2(M)$, the monodromy map $\partial:\pi_2(M) \to (\mathbb{R}^{\ell} )^{{\bf{n}}}$ is given by the formula:
$$\mathbb{Z}^r \ni (m_1,...,m_r) \longmapsto \Bigg{(} \int_{\sum_{i=1}^r m_i [\gamma_i]}\varphi_1 \otimes \Theta_1,..., \int_{\sum_{i=1}^r m_i [\gamma_i]}\varphi_{{\bf{n}}} \otimes \Theta_{{\bf{n}}}  \Bigg{)}$$
(Recall from \S \ref{sec:CFobstr} (item (b)) that the boundary map $\partial$ sends the torsion elements to zero.)

Let us look at its $\underline{i}$-th component: Due to the normalization \eqref{normalization}, this is the function  
$$(m_1,...,m_r) \longmapsto \int_{\sum_{i=1}^r m_i [\gamma_i]} \varphi_{\underline{i}} \otimes \Theta_{\underline{i}}=  m_{\underline{i}} \sum_{j=1}^k \alpha_{\underline{i}j}\varphi_{\underline{i}}\otimes e_j= m_{\underline{i}}\Theta_{\underline{i}}$$
The image of $\partial$ is discrete:
for $(m_1,...,m_r)\neq (m_1',...,m_r')$ we estimate the distance $\|\partial(m_1,...,m_r)-\partial(m_1',...,m_r')\|$. 
Then 
\begin{equation*}
\begin{cases}
\|\partial(m_1,...,m_r)-\partial(m_1',...,m_r')\|=0, \quad \textrm{if }(m_1,...,m_{{\bf{n}}})=(m_1',...,m_{{\bf{n}}}')\\
\|\partial(m_1,...,m_r)-\partial(m_1',...,m_r')\| \geq \min\{ \|\Theta_1\|,...,\|\Theta_{{\bf{n}}}  \|\}>0, \quad \textrm{if }(m_1,...,m_{{\bf{n}}})\neq(m_1',...,m_{{\bf{n}}}').
\end{cases}
\end{equation*}
\end{enumerate}
\end{proof}

\begin{terminology}
Let $M$ be a smooth manifold such that the degree-2 Hurewicz map is an isomorphism (\eg $M$ is simply connected). Let $B$ a trivially abelian transitive Lie algebroid over $M$. The integrable lift constructed in theorem \ref{simplyconnected} is called the \emph{Almeida-Molino integrable lift} of $B$.
\end{terminology}

\begin{remarks}\label{rmk:nonab}
\begin{enumerate}
\item A priori different choices $(\Gamma)$ give different (non isomorphic) extensions. However it is straightforward to see that the integrating groupoids corresponding to different choices are Morita equivalent. This is particularly useful when one deals with the $K$-theory of the associated $C^*$-algebras.
\item This method works also if we start with  a simply connected $M$, and a transitive algebroid $\xymatrix{0\ar[r]&L \ar[r]  &B\ar[r]&TM\ar[r]&0}$ with non necessarily abelian $L$ admitting a splitting whose curvature lies in $\Omega(M,ZL)$. Indeed, after having trivialized $ZL$ by parallel translation, to make the sum map $\mu$ a morphism, we could construct the corresponding $\overline{\omega}$ to take values in $ZL^{\rtimes n(B)}$ which is a semidirect extension with respect to the adjoint action.
\end{enumerate}
\end{remarks}

\begin{ex}\label{ex:AlmMol}
The Almeida-Molino counterexample to integrability \cite{AlmeidaMolino} is the Lie algebroid $B_{\lambda} = TM \oplus_{\omega}(M \times \R)$, where $(M,\omega)$ is the symplectic manifold $M=S^2 \times S^2$, with $\omega = \omega_1 + \lambda\omega_2$, where $\lambda$ is irrational and $\omega_i = pr_i^{\ast}(V)$, for the standard volume form $V$ of $S^2$. The monodromy group is exactly the group of periods of $\omega$, which we calculate to be $\Z + \lambda\Z$. It is a dense subgroup of $\R$, whence $B_{\lambda}$ is not integrable. 

Now, the dimension mentioned in Prop. \ref{prop:irrdeg} is $n(B_{\lambda})=2$ and the Almeida-Molino integrable lift is $\overline{B_{\lambda}} = TM \oplus_{\overline{\omega}} (M \times \R^2)$, where the form $\overline{\omega} \in \Omega^2(M,\R^2)$ is $\overline{\omega} = (\omega_1,\lambda\omega_2)$. We obtain the short exact sequence $$0 \to M \times \R \stackrel{\iota}{\longrightarrow} \overline{B_{\lambda}} \stackrel{\mu}{\longrightarrow} B_{\lambda} \to 0$$ where $\mu$ is the addition map and $\iota$ is the map $t \mapsto (t,\lambda t)$ for all $t \in \R$.

Following \cite{Iglesias:1995aa} and \cite{Crainic:2004aa} we find that the isotropy group of the monodromy groupoid $G(\overline{B_{\lambda}})$ is $T^2 = \R^2 / Per(\overline{\omega})$, namely  $G(\overline{B_{\lambda}})$ corresponds to a torus bundle. Now, passing to the monodromy groupoids of the above extension, we obtain the following short exact sequence of topological groupoids: $$1 \to M \times \R \stackrel{\iota}{\longrightarrow} G(\overline{B_{\lambda}}) \stackrel{\mu}{\longrightarrow} G(B_{\lambda}) \to 1$$ (where the maps are abusively also denoted $\iota$ and $\mu$). Note that the map $\iota : \R \to T^2$ is  the immersion of the real line in the torus by irrational rotation. In ultimate analysis, the non integrability phenomenon here amounts by a \emph{bad quotient} of a Lie group by a non closed subgroup; a situation which is studied, and often encountered in non commutative geometry
Whence, the non-smooth transitive groupoid $G(B_{\lambda})$ is the quotient $G(\overline{B_{\lambda}}) / \iota(M \times \R)$ and the isotropy group $G(B_{\lambda})_x^x$ is closely related with the leaf space of the irrational rotation foliation.
\end{ex}

\begin{remark}
The leaf space viewpoint  of the Almeida-Molino counterexample that we discussed in Example \ref{ex:AlmMol}, was initiated by the following observation, regarding the integrability obstruction given by Crainic and Fernandes in \cite{CFLie}. When we started to investigate the Molino-Almeida counterexample we noticed the following very interesting  coincidence! Consider the irrational rotation foliation on the torus $T^2$. Its leaves are the integral curves of the vector field $X = \partial_x + \lambda\partial_y$ (on $\R^2$). Now Let  $\Delta = -X^2$ be the longitudinal Laplace operator. Applying Fourier transform we find that the spectrum of this operator coincides with the monodromy group $\Z + \lambda\Z$. This observation was the first indication that an irrational rotation foliation might be related with the non integrability phenomenon here.

Actually, this observation extends beyond the integrability obstruction: Recall \cite{FackSkandalis} that when a foliation has a dense leaf, then the closure of the spectrum of  the longitudinal Laplacian coincides with the (closure of) the spectrum of the Laplacian of this leaf. In the case of the irrational rotation, the latter is the Laplacian $-(\partial_x)^2$, whose spectrum is $[0,+\infty]$, obviously coinciding with the closure of $\Z + \lambda\Z$. This reflects the fact that $G(B_{\lambda})$ is the quotient $G(\overline{B_{\lambda}}) / \iota(M \times \R)$.

It is quite tempting to hope that having a solution of the lifting problem for transitive and abelian Lie algebroids implies that the above observations regarding the Almeida-Molino example, are true for a much larger class of algebroids. In a future paper we intend to explore this.
\end{remark}


\section{The de Rham integrable lift (simply connected case)}\label{sec:universal}

In this section we give a second solution to the lifting problem for abelian transitive Lie algebroids over a \textbf{simply connected} manifold $M$. 
Specifically, we intepret the de Rham theorem on a manifold as giving a transitive Lie algebroid over $M$ which is always integrable. This construction is canonical and, in a sense, universal. 
The price to pay is that the dimension of the integrable lift we construct may be higher as compared to the previous Almeida-Molino lift. This is explained in \S \ref{sec:max}.

We will use the de Rham isomorphism to associate an isomorphism class of integrable Lie algebroids to every simply connected manifold. Our assumptions will be:
\begin{assumptions}\label{asm}
We consider trivially abelian Lie algebroids $B \to M$ such that $M$ is simply connected and 
the group $\pi_2(M)$ is finitely generated.
\end{assumptions}
Now let us gather here some observations that will be useful in this section:
\begin{enumerate}
\item The universal coefficients theorem identifies $H_2(M)\otimes_{\Z}\R$ with the vector space $H_2(M,\R)$.
 The map $\iota_{\R}:H_2(M) \longrightarrow H_2(M)\otimes_{\Z}\R, \quad a \longmapsto a\otimes 1$ sends generators of the group $H_2(M)$ to a basis.
\item  The kernel of the map $\iota_{\R}$ is precisely the torsion subgroup of $H_2(M)$. Let $\tau(H_2(M))$ be the torsion subgroup of $H_2(M)$, we will use the natural isomorphism $$H_2(M)\otimes_{\Z}\R \cong \dfrac{H_2(M)}{\tau H_2(M)}\otimes_{\Z}\R$$
\item  The image of $\iota_{\R}(H_2(M))\subset H_2(M)\otimes_{\Z}\R$  is discrete. Indeed, from the previous item, it is equal to the image of of the corresponding map $H_2(M) /\tau H_2(M)\longrightarrow H_2(M)/\tau H_2(M) \otimes_{\Z}\R$ which sends the finitely generated abelian group $H_2(M)/\tau H_2(M)$ to the lattice of integer homology classes.
\end{enumerate}

\subsection{The de Rham form}\label{sec:dRform}

Here we discuss how the familiar de Rham isomorphism can be interpreted as a closed 2-form $\Theta \in \Omega^2(M;H_2(M;\R))$.
 This form will be the curvature of the de Rham algebroid in \S \ref{sec:dRalgd}. Note that the construction of the de Rham form we give here holds for an arbitrary manifold $M$. Whence, just for the purposes of the current section, we will not need assumptions \ref{asm}. However, we can prove that this algebroid is integrable if $M$ is simply connected, as we explain in \S \ref{sec:dRalgd}.
  
Let us begin with the natural map 
\begin{equation}\label{eqn:Jmap}
\mathcal{J}_{M}:H_2(M,\mathbb{R}) \longrightarrow \operatorname{Hom}(H^2_{\operatorname{dR}}(M),\R) \quad \mathcal{J}_{M}([\gamma])(\omega):=\int_{[\gamma]}\omega
 \end{equation}
  \begin{remark}\label{Jdescription}
Using the de Rham isomorphism, we may identify $\operatorname{Hom}(H^2_{\operatorname{dR}}(M);\R)$ with the bidual  $H_2(M,\R)^{**}$. Then the map $\mathcal{J}_{M}$ is given by the bidual embedding (isomorphism) $\mathcal{J}_{M}{[\gamma]}=\operatorname{ev}([\gamma]) $. 
\end{remark}
In the following pages, as in the above remark, we will use freely the two identifications:
\begin{enumerate}
\item $\operatorname{dR}:H^2_{dR}(M)\longrightarrow \operatorname{Hom}(H_2(M;\R),\R)$.
\item $\operatorname{ev}:H_2(M;\R) \longrightarrow H_2(M;\R)^{**}$.
\end{enumerate}

Then:
\begin{itemize}
\item We have defined an element $\mathcal{J}_{M} \in \operatorname{Hom}\Big{(}H_2(M,\mathbb{R});  H_2(M;\R)^{**}  \Big{)}\cong \operatorname{Hom}(H_2(M;\R); H_2(M;\R))$; the de Rham theorem implies that $\mathcal{J}_{M}$ defines a cohomology class, that we also denote $\mathcal{J}_M$, which belongs to 
$ H^2_{\textrm{dR}}\Big{(}M; (H_2(M;\R)  \Big{)}$. This class is represented by some closed form $$\Theta \in \Omega^2(M, H_2(M;\R)).$$ 
Here the notation means that $\Theta$ is valued in the trivial bundle $M \times H_2(M;\R)$.
\item The class $[\Theta]$ can itself be integrated over cycles. For every $[\sigma] \in H_2(M;\mathbb{R})$ we have $$\int_{[\sigma]}[\Theta] \in \operatorname{Hom}(H^2_{\operatorname{dR}}(M);\R).$$ We know the values of these integrals because, by the de Rham Theorem, they characterize $[\Theta]$. Thus: 
$\int_{[\sigma]}[\Theta]=\mathcal{J}_{M}([\sigma])$. In other words
\begin{equation}\label{integralidentity}\bigg{(}\int_{[\sigma]}[\Theta]\bigg{)}([\omega])=\int_{[\sigma]}[\omega],\end{equation}
for every $[\sigma]\in H_2(M,\mathbb{R})$ and $[\omega] \in H^2_{\operatorname{dR}}(M)$.
By the linearity of the integral this also means, for every $[\omega]$:
$$
 \bigg{(}\int_{[\sigma]}[\Theta]\bigg{)}(\omega)=\int_{[\sigma]}\Big{(}\Theta([\omega])\Big{)}.$$
  Looking at $\Theta$ as a form with values in $H_2(M;\R)$ these identities become tautological:
 \begin{equation}\label{taut}
 \int_{[\gamma]}\Theta = [\gamma], \quad [\gamma]\in H_{2}(M;\R).\end{equation}
\end{itemize}


\subsection{Construction of the de Rham algebroid}\label{sec:dRalgd}

Now let us show how we can use the de Rham form given in \S \ref{sec:dRform} to construct an integrable Lie algebroid. Let us give the following:

\begin{proposition-definition}\label{prodef:dRalgd}
The form $\Theta \in \Omega^2(M;H_2(M))$  gives rise to a trivially abelian transitive Lie algebroid, $$B_\Theta^{dR}=TM \oplus_{\Theta } ( M \times H_2(M;\R))$$  which is unique up to isomorphism. The associated isomorphism class is called the \emph{de Rham algebroid} associated with $M$.
\end{proposition-definition}
Regarding the Lie algebroid structure of $B_{\Theta}^{dR}$, recall that the bracket of two sections is $$[(X,a),(Y,b)]=([X,Y],X(b)-Y(a)-\Theta(X,Y))$$ Here $a,b \in C^\infty(M, H_2(M;\R))$ and $X,Y$ are vector fields. The anchor map is the projection to $TM$.

\begin{thm}\label{thm:crucial}
Let $M$ be simply connected,
then the algebroid $B^{dR}_{\Theta}$ is integrable.
\end{thm}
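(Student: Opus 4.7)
The plan is to show directly that the monodromy group of $B^{dR}_\Theta$ is a discrete subgroup of the isotropy $H_2(M;\R)$, which by the Crainic--Fernandes criterion recalled in \S\ref{sec:CFobstr} is equivalent to the integrability of $B^{dR}_\Theta$. Since $B^{dR}_\Theta$ is (trivially) abelian and the curvature form of the tautological splitting is $\Theta$, the monodromy map is given by the explicit formula \eqref{eqn:boundary}:
\[
\partial_m\colon \pi_2(M,m)\longrightarrow H_2(M;\R),\qquad [\gamma]\longmapsto \int_{\mathbb{S}^2}\gamma^{*}(\Theta).
\]

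First I would invoke the factorization of the monodromy map through the Hurewicz morphism, noted as item (d) in \S\ref{sec:CFobstr}: the map $\partial_m$ factors as
\[
\pi_2(M,m)\xrightarrow{\;H\;}H_2(M;\Z)\xrightarrow{\;\iota_\R\;} H_2(M;\R)\xrightarrow{\;\int(\cdot)\Theta\;}H_2(M;\R),
\]
where the last arrow sends $[\sigma]$ to $\int_{[\sigma]}\Theta$. The crucial point is then the tautological identity \eqref{taut}, namely $\int_{[\sigma]}\Theta=[\sigma]$ for every $[\sigma]\in H_2(M;\R)$, which was essentially the defining property of the de Rham form constructed in \S\ref{sec:dRform}. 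Combining these two facts gives
\[
\partial_m([\gamma])=\iota_\R\bigl(H([\gamma])\bigr),
\]
so that the monodromy group equals $\iota_\R\bigl(H(\pi_2(M,m))\bigr)\subset H_2(M;\R)$.

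It remains to argue that this subgroup is discrete. Since $M$ is simply connected, the classical Hurewicz theorem gives an isomorphism $H\colon \pi_2(M,m)\xrightarrow{\sim} H_2(M;\Z)$; consequently the monodromy group coincides with $\iota_\R(H_2(M;\Z))$. By assumptions \ref{asm} the group $\pi_2(M)$ is finitely generated, hence so is $H_2(M;\Z)$, and therefore $H_2(M;\Z)/\tau H_2(M;\Z)$ is a finitely generated free abelian group. As observed in item (c) at the beginning of this section, its image under $\iota_\R$ is a lattice inside $H_2(M;\R)$, in particular a discrete subgroup.

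I do not expect a serious obstacle: the construction of $\Theta$ has been arranged precisely so that integration over cycles gives back the cycle itself, and the remaining work reduces to the standard fact that the integral homology of a simply connected manifold with finitely generated $\pi_2$ embeds as a lattice in the real homology. The only mild care is in justifying that the identity \eqref{taut} (originally stated for real homology classes) may be applied to the integer class $H([\gamma])$, which is immediate once we pass through $\iota_\R$; and in checking that the Crainic--Fernandes monodromy computation in \eqref{eqn:boundary} is applicable without modification in the infinite rank setting (the target $H_2(M;\R)$ is finite dimensional because $H_2(M;\Z)$ is finitely generated, so no technical subtlety arises).
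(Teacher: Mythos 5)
Your proposal is correct and follows essentially the same route as the paper: identify the monodromy map, via the tautological identity \eqref{taut} and the Hurewicz isomorphism, with the composition $\pi_2(M)\to\pi_2(M)\otimes_{\Z}\R\to H_2(M;\R)$, and then invoke the observation that the image of the finitely generated group $H_2(M;\Z)$ is a lattice, hence discrete. No gaps.
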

\begin{proof} Since $M$ is simply connected, the monodromy map $\partial $ is given by integrals of homology cycles in $M$, namely by the map $[\gamma] \longmapsto \int_{[\gamma]} \Theta \in H_2(M;\R)$. Let $H:\pi_2(M) \longrightarrow H_2(M)$ be the Hurewicz map (in this case it is an isomorphism). Then equation \eqref{taut} (with all the identifications we made) says that the monodromy map is given exactly by the composition:
$$\xymatrix{{\pi_2(M)}\ar[r]& \pi_2(M)\otimes_{\mathbb{Z}}\R \ar[r]^{H \otimes \operatorname{Id}}& H_2(M;\R)   },$$ which we have already observed to have discrete image.
\end{proof}

\begin{remark}\label{rmk:crucial2}
In the proof of theorem \ref{thm:crucial}, we considered the extension of the Hurewicz morphism to $\pi_2(M)\otimes_{\Z}\R$. Note that the vector space $\pi_2(M)\otimes_{\Z}\R$ ``kills'' all the torsion elements of $\pi_2(M)$. In other words, torsion elements as such lie in the kernel of the map $\pi_2(M) \to \pi_2(M)\otimes_{\Z}\R$. This is consistent with the fact that the boundary map $\partial$ also ``kills'' the torsion elements, as we mentioned in \S \ref{sec:CFobstr} (item (b)).
\end{remark}

\subsection{The lifting construction}\label{sec:lift}
Let $M$ a simply connected manifold such that $\pi_2(M)$ is finitely generated and $V$ a vector space (finite dimensional). We construct  a canonical extension of the trivially abelian transitive algebroid 
$$\xymatrix@1{0 \ar[r]& M \times V \ar[r]& B_{\omega} \ar[r]& TM \ar[r]& 0}$$ where $B_{\omega}$ is the Lie algebroid $TM\oplus_{\omega}(M\times V)$ defined by a closed form $\omega \in \Omega^2(M;V)$. Recall that the Lie bracket of $B_{\omega}$ is given by $$[X\oplus\xi,Y\oplus\eta]=[X,Y]\oplus\{X(\eta)-Y(\xi)-\omega(X,Y)\}$$

Let $\Theta$ be a representative of the class $\cJ_{M}$ as above. We have the associated de Rham algebroid $B_{\Theta}^{\operatorname{dR}}$ which is integrable. Consider the closed form $\Theta \oplus \omega \in \Omega^2(M;H_2(M;\R) \oplus V)$ and the algebroid 
$$B_{\Theta\oplus\omega}= TM\oplus_{\Theta\oplus\omega} [(H_2(M;\R) \oplus V) \times M ]$$ The trivial bundle $(H_2(M;\R) \oplus V) \times M$ with the standard trivial connection and the Lie bracket of $B_{\Theta\oplus\omega}$ is 
\begin{equation*}\begin{split}
[(X,a\oplus\xi),(Y,b\oplus\eta)] & = [X,Y]\oplus\{X(b\oplus\eta)-Y(a\oplus\xi)-\widehat{\omega}(X,Y)\} \\ & = [X,Y]\oplus\{(X(b)-Y(a)-\Theta(X,Y))\oplus(X(\eta)-Y(\xi)-\omega(X,Y))\}\end{split}\end{equation*}

It is easy to see that the integrability of $B^{dR}_{\Theta}$ implies that $B_{\Theta\oplus\omega}$ is integrable as well.

Now consider the projection $p_V : H_2(M;\R) \oplus V \longrightarrow V$ and put $q_{\omega} = \id_{TM} \oplus p_V$. This is a surjective morphism of vector bundles $q_{\omega} : B_{\Theta\oplus\omega} \to B$:  It is a straightforward computation that, at the level of sections, it preserves the Lie brackets:
 \begin{equation}\begin{split}\label{morphismproperty}
 p_V [X \oplus a \oplus \xi, Y\oplus b \oplus \eta  ]&= p_V  [X,Y]\oplus
 \{(X(b)-Y(a)-\Theta(X,Y))\oplus(X(\eta)-Y(\xi)-\omega(X,Y))\} \\ &=[X,Y]\oplus (X(\eta)-Y(\xi) -\omega(X,Y)) \\&= [p_V (X \oplus a \oplus \xi ), p_V(Y \oplus b \oplus \eta) ].\end{split}
 \end{equation}
  We have proven:
\begin{thm}
Let $B_{\omega}$ be a trivally abelian Lie algebroid over a simply connected manifold $M$. The map $q_\omega$ gives an abelian extension $$\xymatrix{0 \ar[r]&H_2(M;\R) \times M \ar[r]&B_{\Theta\oplus\omega}\ar[r]^{q_{\omega}}&B_{\omega}\ar[r]&0}$$ such that $B_{\Theta\oplus\omega}$ is integrable. 
\end{thm}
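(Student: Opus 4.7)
The plan is to assemble the ingredients already prepared and verify two assertions: that the displayed sequence is an abelian extension of Lie algebroids, and that the middle term $B_{\Theta\oplus\omega}$ is integrable. Most of the computational work is contained in \eqref{morphismproperty}, so what remains is organisational, apart from one substantive step in the monodromy count.

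For the extension, the map $q_\omega=\id_{TM}\oplus p_V$ is a surjective vector bundle morphism, and \eqref{morphismproperty} shows that it preserves Lie brackets. Its kernel consists of elements of the form $(0,a,0)$ with $a\in H_2(M;\R)$, which is exactly the trivial subbundle $M\times H_2(M;\R)\subset B_{\Theta\oplus\omega}$. Applying the bracket formula of $B_{\Theta\oplus\omega}$ recalled just before the theorem to two such elements gives $[(0,a,0),(0,b,0)]=(0,0,0)$, so this kernel is an abelian isotropy; hence we have an abelian extension in the sense of Definition \ref{dfn:centrextns}.

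For integrability, I would invoke the Crainic-Fernandes criterion from \S\ref{sec:CFobstr}: it suffices to show that the monodromy group at some base point $m\in M$ is discrete. Because $B_{\Theta\oplus\omega}$ is trivially abelian by construction (the isotropy bundle is trivial and the connection is the standard flat one), the monodromy formula \eqref{eqn:boundary} applies and yields
$$\widehat{\partial}_m:\pi_2(M,m)\lra H_2(M;\R)\oplus V,\qquad [\gamma]\longmapsto\Big(\int_{\mathbb{S}^2}\gamma^*\Theta,\ \int_{\mathbb{S}^2}\gamma^*\omega\Big).$$
By the tautological identity \eqref{taut}, the first component is the composition of the Hurewicz morphism $\pi_2(M,m)\to H_2(M;\Z)$ with the inclusion $H_2(M;\Z)\to H_2(M;\R)$; under our standing assumption that $M$ is simply connected and $\pi_2(M)$ is finitely generated, this is exactly the discrete lattice of integral homology classes of \S\ref{sec:dRform}. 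This is Theorem \ref{thm:crucial} for the de Rham factor on its own.

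The one nontrivial step is deducing discreteness of $\widehat{\partial}_m(\pi_2(M,m))$ in the sum $H_2(M;\R)\oplus V$ from discreteness of its first projection. The key observation is that the second component $[\gamma]\mapsto\int_{\mathbb{S}^2}\gamma^*\omega$ factors through the integral homology class $H([\gamma])$, since integration of a closed form is a homological invariant; under simple connectedness, $H:\pi_2(M)\to H_2(M;\Z)$ is moreover an isomorphism. Consequently the image of $\widehat{\partial}_m$ coincides with the graph of the $\R$-linear map $\beta:H_2(M;\R)\to V$, $[\sigma]\mapsto\int_\sigma\omega$, restricted to the discrete lattice $L\subset H_2(M;\R)$ produced by the first component. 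The graph of a linear (hence continuous) map over a discrete subset is discrete in the product, so $\widehat{\partial}_m(\pi_2(M,m))$ is discrete, and integrability follows. This last graph-argument is the main point; everything else is bookkeeping built on Theorem \ref{thm:crucial} and the bracket identity \eqref{morphismproperty}.
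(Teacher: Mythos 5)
Your proof is correct and follows the same route as the paper, which assembles the bracket computation \eqref{morphismproperty} with the integrability of $B^{dR}_{\Theta}$ (Theorem \ref{thm:crucial}). In fact your graph argument supplies the justification that the paper compresses into the phrase ``it is easy to see'': discreteness of the first projection of a subgroup of $H_2(M;\R)\oplus V$ does not by itself imply discreteness of the subgroup, and it is precisely because the image of $\widehat{\partial}_m$ is the graph of the linear map $[\sigma]\mapsto\int_{\sigma}\omega$ over the discrete lattice $\iota_{\R}(H_2(M;\Z))$ that the conclusion holds.
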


\subsection{Examples}\label{sec:max}
\begin{ex}\label{molino}
Let us look again at the Almeida-Molino algebroid discussed in \ref{ex:AlmMol}. Recall that $M=\mathbb{S}^2\times 	\mathbb{S}^2$ equipped with the two projections $\operatorname{pr}_i:M \longrightarrow \mathbb{S}^2$. Let $V \in \Omega^2(\mathbb{S}^2)$ be the standard volume form and put $\omega_i:= \operatorname{pr}_i^*(V) \in \Omega^2(M)$. Then, for $\lambda\in \mathbb{R}$ consider the algebroid $B_\lambda$ defined by the closed $2$-form $$\alpha_\lambda= \omega_1+\lambda \omega_2.$$ Note that the dimension mentioned in Prop \ref{prop:irrdeg} is $n(B_{\lambda})=2$.
The monodromy map can be identified with the map $\mathbb{Z}^2 \ni (a,b) \longmapsto a+\lambda b$. Whence if $\lambda$ is irrational, $B_\lambda$ is not integrable.
To construct $B_\Theta^{dR}$, we have to look at $\operatorname{Hom}(H^2_{\operatorname{dR}}(M),\mathbb{R})=H^2_{\operatorname{dR}}(M)^*$. This can be identified with $H^2_{\operatorname{dR}}(M)$ itself by Poincar\'e duality if we
 define $\xi_1,\xi_2 \in \operatorname{Hom}(H^2_{\operatorname{dR}}(M),\R)$ to be the functionals
$$\xi_1(\eta)= \int_M \omega_2 \wedge \eta,\quad  \xi_2(\eta)= \int_M \omega_1 \wedge \eta.$$
Thus $(\xi_1,\xi_2)$  is the dual basis of $(\omega_1,\omega_2)$ as a basis of $H^2_{\operatorname{dR}}(M)$. It is readily checked that the closed form
$$\Theta:= \omega_1 \otimes \xi_1+ \omega_2 \otimes \xi_2,$$ as an element of $\Omega^2(M) \otimes (H^2_{\operatorname{dR}}(M))^*$ represents here our element $\mathcal{J}_M$.  Writing every representative $\eta$ of a class in $ H^2_{\operatorname{dR}}(M)$ as $\eta=\xi_1(\eta)\omega_1+ \xi_2(\eta)\omega_2$ we have:
$$\int_{[\sigma]}(\Theta)(\eta)=\int_{[\sigma]}\eta$$
This is the fundamental property \eqref{integralidentity} of $[\Theta]$.

With respect to the basis $(\xi_1,\xi_2)$ we can write the universal algebroid as $B_{\Theta}^{dR}=TM \oplus (M \times \R^{2})$ and the 
extension can be written as:
\begin{equation}\label{exampleextension}
TM \oplus_{(\Theta,\alpha_\lambda)}\R^3 \longrightarrow TM \oplus (M\times \R), \quad  (x,y,z) \mapsto x+\lambda y.
\end{equation}
\end{ex}

\begin{remark}
In the previous example we see that the de Rham lift may add more dimensions than necessary to build an integrable lift. This is immediately seen by comparing the extension \eqref{exampleextension} with the Almeida-Molino integrable lift we constructed in \S \ref{sec:AlmMol} (which adds only one dimension). Of course this is due to the \emph{universal character} of $B_\Theta^{dR}$ which gives rise to an integrable lift for every transitive Lie algebroid $B \to M$. 
\end{remark}

 

 \section{The non simply connected case} \label{sec:nonsc}
 
In this section we discuss the application of the de Rham lifting construction in case the base manifold $M$ is not necessarily simply connected. As mentioned in the introduction, the extra difficulty arises from the failure of the Hurewicz map to be an isomorphism. To deal with this, we pull back the algebroid $B \to M$ to the universal cover $\widetilde{M}$. Then we impose a certain topological assumption which somehow measure how the equivariant cohomology of $\widetilde{M}$ is \emph{mixed} with its non equivariant cohomology. Indeed, the curvature of $B$ lies in the equivariant cohomology but the obstruction, i.e. the integral of forms is done with respect to the standard homology.
This assumption 
 ensures that the de Rham integrable lift of the pullback algebroid  passes to an integrable lift of the original algebroid $B$.

We start by explaining the motivation behind this assumption in \S \ref{sec:assnonsc}. In \S \ref{sec:solnonsc} we explain this assumption solves the problem.

\subsection{The assumption}\label{sec:assnonsc}

Let us make a fresh start with a manifold $M$ such that $\pi_2(M)$ is finitely generated.  Let $p : \widetilde{M} \to M$ be the universal covering map. Since $\pi_2(M)\cong \pi_2(\widetilde{M})$, applying the Hurewicz theorem on $\widetilde{M}$ we see that $H^2_{dR}(\widetilde{M};\R)$ is finitely generated. Put $\Gamma$ the fundamental group $\pi_1(M)$; it acts (say on the right) on $\widetilde{M}$, thus on (the left) on differential forms and singular cycles. These actions descends to homology and cohomology. Let $Z^2(\widetilde{M};\R)$ be the vector space of closed two-forms on $\widetilde{M}$.

Our assumption arises from analyzing the equivariance of the de Rham form $\Theta \in Z^2\big{(}\widetilde{M}; H^2_{dR}(\widetilde{M})^*)$, which is a representative of the de Rham isomorphism $\cJ_{\widetilde{M}}$ as a class in $H^2_{dR}(\widetilde{M};H_2(\widetilde{M},\R)) = H^2_{dR}(\widetilde{M};H^2_{dR}(\widetilde{M})^{\ast})$.

\begin{prop}\label{identificationsections} 
We have a canonical identification
$$\Big{\{} \textrm{linear sections S of  the surjection } q:Z^2(\widetilde{M};\R) \longrightarrow H^2(\widetilde{M})  \Big{\}}  \longleftrightarrow  \Big{\{}\textrm{ representatives }\Theta \textrm{ of } \cJ_{\widetilde{M}}  \Big{\}} . $$ This is defined in both directions by the formula  
\begin{equation}\label{eq:correspondence}
\langle \Theta(X,Y) , [\omega] \rangle = S([\omega])(X,Y) 
\end{equation}
where $S:H^2(\widetilde{M};\R) \longrightarrow Z^2(\widetilde{M};\R)$ is a section and $X,Y$ are vector fields.
\end{prop}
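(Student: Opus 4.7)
The plan is to verify that formula (\ref{eq:correspondence}) defines mutually inverse assignments between the two indicated sets. The basic identification in play is the de Rham duality $H_2(\widetilde{M};\R)\cong H^2_{dR}(\widetilde{M})^{\ast}$, under which a form $\Theta\in\Omega^2(\widetilde{M};H_2(\widetilde{M};\R))$ is treated as a 2-form valued in the dual of $H^2_{dR}(\widetilde{M})$; the pairing $\langle\Theta(X,Y),[\omega]\rangle$ in (\ref{eq:correspondence}) is then simply evaluation of the functional $\Theta(X,Y)$ on the cohomology class $[\omega]$.

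Starting from a linear section $S:H^2_{dR}(\widetilde{M})\to Z^2(\widetilde{M};\R)$, I would define a form $\Theta$ by $\Theta(X,Y)([\omega]):=S([\omega])(X,Y)$. Linearity of $S$ in $[\omega]$ guarantees that $\Theta(X,Y)$ is a well-defined element of $H^2_{dR}(\widetilde{M})^{\ast}$, while skew-symmetry in $(X,Y)$ is inherited from $S([\omega])$; thus $\Theta$ is a smooth $H^2_{dR}(\widetilde{M})^{\ast}$-valued 2-form on $\widetilde{M}$. Closedness is immediate: evaluating $d\Theta$ on a fixed $[\omega]$ produces $d(S([\omega]))$, which vanishes because $S([\omega])$ lies in $Z^2(\widetilde{M};\R)$. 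To identify $[\Theta]$ with $\cJ_{\widetilde{M}}$ I would integrate over any $[\sigma]\in H_2(\widetilde{M};\R)$ and pair with $[\omega]\in H^2_{dR}(\widetilde{M})$:
\begin{equation*}
\Big(\int_{[\sigma]}\Theta\Big)([\omega]) \;=\; \int_{[\sigma]}S([\omega]) \;=\; \int_{[\sigma]}\omega \;=\; \cJ_{\widetilde{M}}([\sigma])([\omega]),
\end{equation*}
where the second equality uses the section property $q\circ S=\operatorname{id}$, so that $S([\omega])$ is cohomologous to $\omega$.

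Conversely, given a representative $\Theta$ of $\cJ_{\widetilde{M}}$, I would read the same formula (\ref{eq:correspondence}) in the opposite direction, setting $S([\omega])(X,Y):=\langle\Theta(X,Y),[\omega]\rangle$. Linearity in $[\omega]$ is clear, and the resulting real-valued 2-form $S([\omega])\in\Omega^2(\widetilde{M})$ is closed because the de Rham differential commutes with evaluation at a fixed class $[\omega]$. The section property $q\circ S=\operatorname{id}$ then amounts to the content of $[\Theta]=\cJ_{\widetilde{M}}$: for every $[\sigma]$ one has $\int_{[\sigma]}S([\omega])=(\int_{[\sigma]}\Theta)([\omega])=\int_{[\sigma]}\omega$, and de Rham duality on $\widetilde{M}$ forces $[S([\omega])]=[\omega]$.

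The two assignments are inverse to each other by construction, as they are both encoded in the single identity (\ref{eq:correspondence}); no extra verification is needed beyond the closedness, linearity, and cohomology-class checks performed above. The only genuinely nontrivial step, and the one I expect will occupy most of the written proof, is the identification $[\Theta]=\cJ_{\widetilde{M}}$ when $\Theta$ is built from $S$; the displayed integration computation settles this, relying only on the section property of $S$ together with de Rham duality.
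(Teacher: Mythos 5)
Your proposal is correct and follows essentially the same route as the paper's own proof: both directions are read off from the single formula \eqref{eq:correspondence}, closedness is checked by evaluating $d\Theta$ against a fixed class $[\omega]$, and the matching of $[\Theta]$ with $\cJ_{\widetilde{M}}$ (respectively the section property $q\circ S=\operatorname{id}$) is verified by integrating over homology classes and invoking the de Rham theorem. No gaps.
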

\begin{proof}
Given a linear section $S$, formula \eqref{eq:correspondence} defines a differential form $\Theta_S$ which is closed because $ \langle d\Theta_S , [\omega] \rangle = d ( S([\omega])) =0.$ Integrating \eqref{eq:correspondence} over homology classes we also see that $\Theta_S$ represents $\mathcal{J}_{\widetilde{M}}$ because $S$ is a section. 

Conversely, assume that a representative $\Theta$ of $\cJ_{\widetilde{M}}$ is given and define $S_\Theta$ using \eqref{eq:correspondence}. This is a section because, for every cohomology class $[\omega]$ and every homology class $[\sigma]$: $$\int_{[\sigma]}[\omega]=  \langle \int_{[\sigma]}\Theta_S, [\omega] \rangle =\int_{[\sigma]}S([\omega]).$$ Whence $S([\omega]) $ and $\omega$ give same cohomology classes.
\end{proof}

Now assume that a representative $\Theta$ of $\cJ_{\widetilde{M}}$ is equivariant. Then the corresponding section $S_\Theta$ is also equivariant. Moreover it is an injective map $S_\Theta: H^2(\widetilde{M};\R) \longrightarrow Z^2(\widetilde{M};\R)$. We obtain a vector space of differential forms  $$\mathcal{E}= \operatorname{Image}(S_\Theta)$$ The equivariance of $S_{\Theta}$ implies that $\cE$ has the properties given in the following assumptions:

 \begin{assumptions}\label{ass:nonsc}
 There exists a finite dimensional subspace of forms $\mathcal{E} \subset Z^2(\widetilde{M};\R)$ such that
\begin{enumerate}
\item	it is $\Gamma$-invariant as a subspace ($ \gamma \cdot \varphi  \in \mathcal{E}$ for every $\varphi \in \mathcal{E}$, $\gamma \in \Gamma$).
\item The natural map $r:\mathcal{E} \longrightarrow H^2_{dR}(\widetilde{M};\R)$ is surjective.
\end{enumerate}
 \end{assumptions}

We denote  $\langle \cdot , \cdot \rangle$ the natural duality $ \mathcal{E}^* \times \mathcal{E} \to \R$.

\subsection{The integrable lift}\label{sec:solnonsc}

Now let us consider an abelian transitive Lie algebroid $B_{\omega} \to M$. Here we show that with a space of forms $\cE$ as in assumption \ref{ass:nonsc} we can construct explicitly an integrable lift of $B_{\omega}$. Let us start by assuming \ref{ass:nonsc}. Using this asumption we first construct an equivariant closed form $\Theta$ which is not necessarily a representative of the de Rham element $\cJ_{\widetilde{M}}$.
 
 Proceeding as before, the de Rham theorem identifies $\operatorname{Hom}\Big{(}\mathcal{E},  H^2_{dR}(\widetilde{M},\R) \Big{)}$ with $H^2_{dR}(\widetilde{M};\mathcal{E}^*)$. The map $r$ is then represented by a closed form 
 $$\Theta \in \Omega^2(\widetilde{M};\mathcal{E}^*).$$ 
 
 Integrating $\Theta$ on some class $[\sigma]\in H_2(\widetilde{M})$ produces a functional:  $\int_{[\sigma]}\Theta \in \mathcal{E}^*$ for which is, by definition: 
 $$\big{\langle} \int_{[\gamma]} \Theta ,\varphi \big{\rangle} = \int_{[\gamma]} \Theta(  \varphi ).$$  More importantly, by the de Rham Theorem
 $$
 \big{\langle} \int_{[\sigma]}\Theta , \varphi \big{\rangle} = \int_{[\sigma]} r(\varphi) = \int_{[\sigma]} [\varphi], \quad \varphi \in \mathcal{E}.$$
This last identity suggests that, in this case, we have a canonical (tautological) representative. Indeed we can compute the form $\Theta$. For $v,w \in T_m\widetilde{M}$ and $\varphi \in \mathcal{E}$:
\begin{equation}\label{explicittheta}\langle \Theta(v,w), \varphi \rangle = \varphi(v,w).\end{equation}

Regarding the equivariance of $\Theta$, let $R_{\gamma}: \widetilde{M} \longrightarrow \widetilde{M}$ be the right action, then $\Gamma$ acts on the left on $\Omega^2(\widetilde{M};\mathcal{E}^*)$ according to the formula
 $$ \langle \omega(X,Y),\varphi \rangle = \langle \omega(dR_{\gamma}(X), dR_{\gamma}(Y), \varphi \cdot \gamma^{-1} \rangle, \quad   X,Y \in \Gamma(\widetilde{M}), \quad \varphi \in \mathcal{E}.$$  
With a slight abuse we say that a form is equivariant when is invariant under the previous action. Identity \eqref{explicittheta} immediately implies:
\begin{prop}
The form $\Theta$ is equivariant.	
\end{prop}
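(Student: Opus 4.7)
The plan is to verify equivariance of $\Theta$ directly from the tautological formula \eqref{explicittheta}. Since $\Theta$ is defined by $\langle \Theta_m(X,Y),\varphi\rangle = \varphi_m(X,Y)$, no cohomological input is required: the assertion will reduce to a pointwise equality of $\varphi$ evaluated on tangent vectors that are $\Gamma$-related in a compatible way. The only content is unpacking the $\Gamma$-action on $\mathcal{E}^{\ast}$-valued forms and invoking the functoriality of the pullback under composition.

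Concretely, I would fix $\gamma\in\Gamma$, a point $m\in\widetilde{M}$, tangent vectors $X,Y\in T_m\widetilde{M}$, and a test form $\varphi\in\mathcal{E}$, and then spell out what invariance of $\Theta$ amounts to, namely the equality
$$\langle \Theta_m(X,Y),\varphi\rangle \;=\; \langle \Theta_{m\gamma}(dR_\gamma X,\,dR_\gamma Y),\,\varphi\cdot\gamma^{-1}\rangle.$$
Substituting \eqref{explicittheta} on both sides reduces this identity to
$$\varphi_m(X,Y) \;=\; (\varphi\cdot\gamma^{-1})_{m\gamma}(dR_\gamma X,\,dR_\gamma Y).$$
Interpreting the $\Gamma$-action on $\mathcal{E}$ as pullback of forms, so that $\varphi\cdot\gamma^{-1}=R_{\gamma^{-1}}^{\ast}\varphi$, the standard pullback formula turns the right-hand side into $\varphi_{R_{\gamma^{-1}}(m\gamma)}\bigl(dR_{\gamma^{-1}}\,dR_\gamma X,\,dR_{\gamma^{-1}}\,dR_\gamma Y\bigr)$, which equals $\varphi_m(X,Y)$ by $R_{\gamma^{-1}}\circ R_\gamma = \mathrm{id}_{\widetilde{M}}$ and the chain rule. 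This is exactly the left-hand side.

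The only step demanding any care, and the only plausible obstacle, is notational rather than mathematical: the paper combines a left action on $\Omega^2(\widetilde{M};\mathcal{E}^{\ast})$ with the right-multiplication notation $\varphi\cdot\gamma^{-1}$ on $\mathcal{E}$, and one must commit upfront to the convention that $\varphi\cdot\gamma^{-1}=R_{\gamma^{-1}}^{\ast}\varphi$, so that the whole bookkeeping collapses under $R_{\gamma^{-1}}\circ R_\gamma=\mathrm{id}$. Once that convention is fixed, the proof is a one-line application of the chain rule, and the proposition follows.
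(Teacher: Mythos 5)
Your proof is correct and is exactly the computation the paper leaves implicit when it says that identity \eqref{explicittheta} ``immediately implies'' equivariance: substituting the tautological formula into both sides of the invariance condition and cancelling $R_{\gamma^{-1}}\circ R_\gamma=\mathrm{id}$. Your explicit fixing of the convention $\varphi\cdot\gamma^{-1}=R_{\gamma^{-1}}^{\ast}\varphi$ is a welcome clarification of a point the paper glosses over, but the argument is the same.
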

It follows that $\Theta$ descends to a form on $M$ with coefficients on the flat bundle ${E}^*:= \widetilde{M} \times_{\Gamma} \mathcal{E}^*$ over $M$, which corresponds to $\mathcal{E}^*$.
We call this form $\theta \in \Omega^2(M;E^*)$ and we use it to construct the algebroid:
 $$B^{dR}_\theta:=TM \oplus_\theta E^*.$$
Let us describe its structure: call $\nabla^{E^*}$ the flat connection in $E^*$.  Under the description which identifies sections of $E^*$ with equivariant maps $\widetilde{M} \longrightarrow \mathcal{E}^*$ this corresponds to the standard differential. For this reason $\nabla^{E^*} \theta=0$ and the bracket is
 $$[X \oplus \xi, Y \oplus \eta]=[X,Y] \oplus (\nabla_X^{E^*}\eta - \nabla_Y^{E^*}\xi -\theta(X,Y)), \quad X,Y \in \Gamma(TM), \,\, \xi,\eta \in \Gamma(E^*).$$

 \begin{thm}
 The algebroid 	$B^{dR}_\theta$ is integrable.
 \end{thm}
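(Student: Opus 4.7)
The plan is to verify the integrability criterion of Crainic--Fernandes: since $B^{dR}_\theta$ is transitive (and abelian, with isotropy $E^\ast$), it suffices to show that for some (equivalently every) $m \in M$ the monodromy group $N_m \subset E^\ast_m$ is a discrete subgroup of the vector space $E^\ast_m$.

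I would fix $m \in M$ and a lift $\tilde m \in p^{-1}(m)$, which gives a canonical identification $E^\ast_m \cong \mathcal{E}^\ast$. The key observation is that the pullback algebroid $p^{!}B^{dR}_\theta$ is trivially abelian over the simply connected manifold $\widetilde{M}$, with isotropy canonically identified (via parallel transport) with the trivial bundle $\widetilde{M} \times \mathcal{E}^\ast$, and with curvature form precisely $\Theta$. By the universal cover formulation of the monodromy map recalled in \S\ref{sec:CFobstr}(c), for every class $[\gamma] \in \pi_2(M,m) \cong \pi_2(\widetilde{M}, \tilde m)$ with (smooth) lift $\tilde\gamma : \mathbb{S}^2 \to \widetilde{M}$ one has
\begin{equation*}
\partial_m [\gamma] \;=\; \int_{\mathbb{S}^2} \tilde\gamma^\ast \Theta \;\in\; \mathcal{E}^\ast .
\end{equation*}
Pairing this functional against an arbitrary $\varphi \in \mathcal{E}$ and invoking the tautological identity \eqref{explicittheta} (equivalently \eqref{taut} in this setting), we obtain
\begin{equation*}
\bigl\langle \partial_m [\gamma], \varphi \bigr\rangle \;=\; \int_{\mathbb{S}^2} \tilde\gamma^\ast \varphi \;=\; \int_{H([\tilde\gamma])} [\varphi],
\end{equation*}
where $H : \pi_2(\widetilde{M}) \to H_2(\widetilde{M};\mathbb{Z})$ is the Hurewicz morphism (an isomorphism because $\widetilde{M}$ is simply connected) and $[\varphi] = r(\varphi) \in H^2_{\operatorname{dR}}(\widetilde{M};\mathbb{R})$.

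Thus $\partial_m$ factors as the composition
\begin{equation*}
\pi_2(M,m) \;\xrightarrow{\;H \circ p_\ast^{-1}\;}\; H_2(\widetilde{M};\mathbb{Z}) \;\xrightarrow{\;\iota\;}\; H^2_{\operatorname{dR}}(\widetilde{M};\mathbb{R})^\ast \;\xrightarrow{\;r^\ast\;}\; \mathcal{E}^\ast ,
\end{equation*}
where $\iota$ is the natural integration map. Now combine two facts. First, the image of $\iota$ is a discrete subgroup of $H^2_{\operatorname{dR}}(\widetilde{M};\mathbb{R})^\ast$: indeed $\pi_2(M)$ is finitely generated and so is $H_2(\widetilde{M};\mathbb{Z})$, and (as recorded in the preliminary observations at the start of \S\ref{sec:universal}) its image in the real dual of the finite dimensional space $H^2_{\operatorname{dR}}(\widetilde{M};\mathbb{R})$ lands in the integral lattice and is therefore discrete. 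Second, the dual map $r^\ast$ is injective because $r : \mathcal{E} \to H^2_{\operatorname{dR}}(\widetilde{M};\mathbb{R})$ is surjective by assumption \ref{ass:nonsc}(b). An injective linear map between finite dimensional real vector spaces sends discrete subgroups to discrete subgroups, so $N_m = r^\ast(\iota(H_2(\widetilde{M};\mathbb{Z})))$ is discrete in $\mathcal{E}^\ast \cong E^\ast_m$, which is what we had to prove.

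The main point to be careful about is the consistency of the identification $E^\ast_m \cong \mathcal{E}^\ast$ across different choices of the lift $\tilde m$: changing lift amounts to acting by some $\gamma \in \Gamma$, but the equivariance of $\Theta$ (and hence of the whole factorization above) together with the invariance of the integer lattice of $H_2(\widetilde{M};\mathbb{Z})$ under $\Gamma$ implies that the monodromy subgroup of $E^\ast_m$ is intrinsically defined, independent of the lift. Apart from this bookkeeping the argument is essentially a direct application of the simply connected case (Theorem \ref{thm:crucial}) descended through the covering.
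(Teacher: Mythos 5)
Your proof is correct and takes essentially the same route as the paper's: both reduce integrability to discreteness of the monodromy group, identify the monodromy map (modulo torsion) as the composition of the Hurewicz/integration lattice map with the transpose $r^{*}$, and conclude from the injectivity of $r^{*}$ (which follows from the surjectivity of $r$ in assumption \ref{ass:nonsc}). You simply make explicit the integration formula for $\partial_m$ on the universal cover and the lift/equivariance bookkeeping that the paper's terser identification $r^{*}=\partial\otimes 1_{\R}$ leaves implicit.
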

 \begin{proof}
 Let us call $\partial:\pi_2(\widetilde{M}) \longrightarrow \mathcal{E}^*$, the monodromy map of $B_\theta^{dR}$. We are assuming the natural map $r:\mathcal{E}\longrightarrow H^2_{dR} (\widetilde{M};\R)$ to be surjective. This implies that the transpose map $r^*: \operatorname{Hom}(H^2_{dR}(\widetilde{M},\R);\R) \longrightarrow \mathcal{E}^*$ is injective. Using the de Rham theorem and the Hurewicz isomorphism to identify the domain of $r^*$ with $\pi_2(\widetilde{M})\otimes \R$ we see immediately that $r^*= \partial \otimes 1_{\R}$. It follows that the image of the monodromy map is discrete. \end{proof}
Now recall that the original Lie algebroid $B_{\omega} \to M$ can be written as an abelian extension 
$$0 \to \widetilde{M} \times_{\Gamma} V \to TM \oplus_{\omega}(\widetilde{M} \times_{\Gamma} V) \to TM \to 0$$ Exactly as in \S \ref{sec:lift} we consider the form 
$\hat{\omega} = \theta \oplus \omega \in Z^2(M,E^* \oplus \widetilde{M} \times_{\Gamma} V)$ and the algebroid 
$$B_{\hat{\omega}}=TM \oplus E^* \oplus (\widetilde{M}\times_{\Gamma}V).$$ 
On the bundle $E^* \oplus  \widetilde{M}\times_{\Gamma}V$ we are considering the direct sum flat connection 
$\widehat{\nabla}:=\nabla^{E^*}\oplus \nabla$
where we call just $\nabla$ the flat connection on 
$\widetilde{M} \times_{\Gamma}V$. In particular 
$\widehat{\nabla}\hat{\omega}=0$ and its bracket is in  normal form given by $\widehat{\nabla}$ and $\hat{\omega}$.

It is easy to see that 
$E^* \oplus  \widetilde{M}\times_{\Gamma}V$ is integrable as well. A computation identical to the \eqref{morphismproperty} shows that the projection $p_V:E^* \oplus \widetilde{M} \times_{\Gamma} V \to \widetilde{M} \times_{\Gamma} V$ is a morphism of algebroids. 
We have thus proven:
\begin{thm}
	Under the assumptions \ref{ass:nonsc} we have the extension of transitive algebroids
	$$0 \to E^* \to B_{\hat{\omega}} \to B_{\omega} \to 0$$ with $B_{\hat{\omega}}$ integrable.  
\end{thm}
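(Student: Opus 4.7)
\emph{Step 1: The morphism and the extension.} By Mackenzie's normal-form construction, the closed two-form $\hat{\omega} = \theta \oplus \omega$ together with the flat Lie connection $\widehat{\nabla} = \nabla^{E^*} \oplus \nabla$ on the abelian LAB $E^* \oplus (\widetilde{M}\times_\Gamma V)$ defines a transitive Lie algebroid structure on $B_{\hat{\omega}}$. I take the surjection $q := \id_{TM} \oplus p_V : B_{\hat{\omega}} \to B_\omega$, where $p_V : E^* \oplus (\widetilde{M}\times_\Gamma V) \to \widetilde{M}\times_\Gamma V$ is the projection. Since $p_V$ intertwines the flat connections and sends $\hat{\omega}$ to $\omega$, the computation \eqref{morphismproperty} transposed to this setting shows that $q$ preserves the Lie brackets in normal form, hence is a morphism of Lie algebroids. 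Its kernel is $E^*$ with zero anchor and trivial bracket (inherited from the abelian factor), which produces the claimed abelian extension of transitive Lie algebroids $0 \to E^* \to B_{\hat{\omega}} \to B_\omega \to 0$.

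\emph{Step 2: Splitting the monodromy map.} To establish integrability of $B_{\hat{\omega}}$ I pull back to the universal cover $p:\widetilde{M} \to M$, where the flat bundle $E^* \oplus (\widetilde{M}\times_\Gamma V)$ trivialises to $\widetilde{M} \times (\mathcal{E}^* \oplus V)$ and the pullback of $\hat{\omega}$ is $\Theta \oplus p^*\omega$. By formula \eqref{eqn:boundary} and the canonical identification $\pi_2(M,m) \cong \pi_2(\widetilde{M}, \tilde{m})$, the monodromy map of $B_{\hat{\omega}}$ decomposes component-wise as
$$\partial^{\hat{\omega}} : \pi_2(\widetilde{M}) \longrightarrow \mathcal{E}^* \oplus V, \qquad [\gamma] \longmapsto \bigl(\partial^\theta[\gamma],\, \partial^\omega[\gamma]\bigr),$$
where $\partial^\theta$ is the monodromy map of $B^{dR}_\theta$ and $\partial^\omega$ that of (the pullback of) $B_\omega$. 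From the proof of the preceding theorem, $\partial^\theta \otimes 1_\R$ is identified, via the Hurewicz isomorphism for the simply connected $\widetilde{M}$, with the transpose $r^*$; since $r$ is surjective by Assumption \ref{ass:nonsc}, $r^*$ is injective, so $\ker(\partial^\theta)$ is exactly the torsion subgroup $\tau\pi_2(\widetilde{M})$.

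\emph{Step 3: Discreteness.} Because $\partial^\omega$ takes values in the vector space $V$, it kills every torsion element, so $\ker(\partial^\theta) \subseteq \ker(\partial^\omega)$. Consequently the first-coordinate projection $\operatorname{Im}(\partial^{\hat{\omega}}) \to \operatorname{Im}(\partial^\theta)$ is a continuous group isomorphism onto a discrete subgroup of $\mathcal{E}^*$; since preimages of the (open) singletons of $\operatorname{Im}(\partial^\theta)$ are open and separate the points of $\operatorname{Im}(\partial^{\hat{\omega}})$, the latter is itself discrete in $\mathcal{E}^* \oplus V$, whence $B_{\hat{\omega}}$ is integrable. The only real obstacle in this plan is to confirm that the direct-sum splitting of $\hat{\omega}$ propagates cleanly to a splitting of the monodromy map on the universal cover; once that is done via \eqref{eqn:boundary} applied term by term, the whole argument reduces to the linear-algebraic fact that a surjection $r$ between finite-dimensional vector spaces has injective transpose $r^*$.
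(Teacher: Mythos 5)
Your proposal is correct and follows the paper's own route: the same direct-sum construction of $B_{\hat{\omega}}$ with the flat connection $\widehat{\nabla}$, the same surjection $\id_{TM}\oplus p_V$ checked to be a Lie algebroid morphism by the computation \eqref{morphismproperty}, and integrability deduced from that of $B^{dR}_{\theta}$. The only difference is that your Step 3 actually proves the discreteness of the combined monodromy group --- identifying its image as the graph of a homomorphism over the discrete group $\operatorname{Im}(\partial^{\theta})$, via $\ker(\partial^{\theta})=\tau\pi_2(M)\subseteq\ker(\partial^{\omega})$ --- a point the paper dismisses with ``it is easy to see''; your argument for it is valid.
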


\begin{remark}
In the overall we proved that a vector subspace $\cE$ of closed 2-forms in $Z^2(\widetilde{M},\R)$ which satisfies assumptions \ref{ass:nonsc} exists if and only if there exists an equivariant form $\Theta \in \Omega^2(\widetilde{M},\cE^{\ast})$ such that $B_{\theta}^{dR}$ is integrable. Also notice that the identification in Proposition \ref{identificationsections} restricts to 
$${\{} \textrm{equivariant sections}  {\}}  \longleftrightarrow {\{} \textrm{equivariant forms}{\}}.$$ In some cases an equivariant section can be constructed, for instance if the fundamental group is finite, by averaging.  
\end{remark}

\bibliographystyle{plain}

\end{document}